\newcommand{\N}{\mathbb{N}}
\newcommand{\h}{\mathrm{H}}
\newcommand{\C}{\mathbb{C}}
\newcommand{\D}{\mathbb{D}}
\newcommand{\spa}{\mathrm{span}}
\newcommand{\dist}{\mathrm{dist}}
\newcommand{\diag}{\mathrm{diag}}
\newcommand{\rkhs}{\mathcal{H}}
\numberwithin{equation}{section}
\theoremstyle{plain}
\newtheorem*{theo*}{Theorem}
\newtheorem{theo}{Theorem}[section]
\newtheorem{prop}[theo]{Proposition}
\newtheorem{lemma}[theo]{Lemma}
\newtheorem{question}[theo]{Question}
\theoremstyle{definition}
\newtheorem{defi}[theo]{Definition}
\newtheorem{rem}[theo]{Remark}
\newtheorem{ex}[theo]{Example}
\title{Weakly Separated Bessel Systems of Model Spaces}
\author{Alberto Dayan}
\address{Department of Mathematics\newline Washington University in St. Louis,\newline One Brookings Drive, St. Louis, MO 63130, USA}
\email{alberto.dayan@wustl.edu}
\date{\today}
\thanks{The author was partially supported by National Science Foundation Grant DMS 1565243}
\begin{document}
\maketitle
\begin{abstract}
We show that any weakly separated Bessel system of model spaces in the Hardy space on the unit disc is a Riesz system and we highlight some applications to interpolating sequences of matrices. This will be done without using the recent solution of the Feichtinger conjecture, whose natural generalization to multi-dimensional model sub-spaces of $\h^2$ turns out to be false.
\end{abstract}
\section{Introduction}
Let $\h^2$ be the Hardy space on the unit disc $\D$, that is, the reproducing kernel Hilbert space of those power series centered at the origin with square-summable Taylor coefficients. Its kernel $s$ is the well-studied \emph{Szegö kernel}
\[
s_w(z):=\frac{1}{1-\overline{w}z}\qquad w, z\in\D
\]
and its multiplier algebra can be identified isometrically with $\h^\infty$, the algebra of bounded holomorphic functions on $\D$. A key role for the study of the function theory and the hyperbolic geometry of the unit disc is played by \emph{inner functions}, which are those bounded analytic functions on the unit disc with an unimodular radial limit almost everywhere on the unit circle. Given an inner function $\Theta$, one can define the associated \emph{model space}
\[
H_\Theta:=\h^2\ominus\Theta\h^2
\]
as the orthogonal complement in the Hardy space of all multiples of $\Theta$ in $\h^2$. A great treatment of the main properties of model spaces, together with their interactions with operator theory on spaces of analytic functions, can be found in \cite{ross}.\\
Any function in $\h^2$ that vanishes with multiplicity $m$ at a point $\lambda$ in $\D$ is divisible in $\h^2$ by a \emph{Blaschke factor}, i.e., an inner function of the form $b_\lambda^m$, where
\[
b_\lambda(z):=\frac{\lambda-z}{1-\overline{\lambda}z}\qquad z\in\D.
\]
Therefore the model space associated to $b_\lambda^m$ is $m$-dimensional and it is spanned by the kernels at $\lambda$ that represent up to $m-1$ derivatives of any $\h^2$ function at $\lambda$, that is,
\begin{equation}
\label{enq:lambdamodel}
H_{b_\lambda^m}=\spa\left\{s_\lambda, \frac{\partial~s_\lambda}{\partial\overline{w}},\dots, \frac{\partial^{m-1}~s_\lambda}{\partial\overline{w}^{m-1}}\right\}.
\end{equation}
Since a model space is a subspace of $\h^2$ generated by an inner function, it comes natural to ask whether function theoretical properties of a sequence of inner functions $(\Theta_n)_{n\in\N}$ translate to Euclidean properties for the sequence $(H_{\Theta_n})_{n\in\N}$. Out of the many results that constitute such a valuable dictionary between operator theory and function theory, one of the most significant for the purposes of this note can  be found in \cite[Th. 3.2.14]{nik}:
\begin{theo}
\label{theo:nik}
Let $(\Theta_n)_{n\in\N}$ be a sequence of inner functions such that $\Theta:=\prod_{n=1}^\infty\Theta_n$ converges uniformly on any compact subset of $\D$. The following are equivalent:
\begin{description}
\item[(i)] For any bounded sequence $(\phi_n)_{n\in\N}$ in $\h^\infty$ there exists a function $\phi$ in $\h^\infty$ such that
\begin{equation}
\label{eqn:nik:interp}
\phi-\phi_n\in\Theta_n\h^2,\qquad n\in\N;
\end{equation}
\item[(ii)] There exists a $C\geq1$ such that, for any sequence $(h_n)_{n\in\N}$ of unit vectors in $\h^2$ such that $h_n$ belongs to $H_{\Theta_n}$ for any $n$ in $\N$ and for any $(a_n)_{n\in\N}$ in $l^2$,
\begin{equation}
\label{eqn:nik:riesz}
\frac{1}{C^2}~\sum_{n\in\N}|a_n|^2\leq\left|\left|\sum_{n\in\N} a_nh_n\right|\right|^2\leq C^2~\sum_{n\in\N}|a_n|^2;
\end{equation}
\item[(iii)] There exists a positive $\delta$ such that, for any $z$ in $\D$,
\[
|\Theta(z)|\geq\delta~\inf_{n\in\N}|\Theta_n(z)|.
\]
\end{description}
\end{theo}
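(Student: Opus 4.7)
The plan is to establish the cyclic chain (iii) $\Rightarrow$ (i) $\Rightarrow$ (ii) $\Rightarrow$ (iii), with the bulk of the work lying in the first implication.

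For (iii) $\Rightarrow$ (i), I would adopt a constructive $\overline{\partial}$-argument in the spirit of Wolff's proof of the corona theorem. Given a bounded sequence $(\phi_n)$ in $\h^\infty$, form a smooth candidate $\Phi_0 = \sum_n \chi_n \phi_n$ on $\D$, where $\{\chi_n\}$ is a smooth partition of unity subordinate to regions where $|\Theta_n|$ is small. Then $\phi = \Phi_0 - u\Theta$ with $u$ solving $\overline{\partial}u = (\overline{\partial}\Phi_0)/\Theta$ is holomorphic; and since $\Theta/\Theta_n \in \h^\infty$, one checks that $\phi - \phi_n \in \Theta_n\h^2$ for every $n$. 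The heart of the matter is the bounded solvability of this $\overline{\partial}$-equation: the right-hand side produces a measure on $\D$ whose Carleson norm must be controlled, and this is precisely where (iii) enters, since the lower bound $|\Theta(z)| \geq \delta\inf_n|\Theta_n(z)|$ pins down the size of $1/\Theta$ on the supports of $\overline{\partial}\chi_n$.

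For (i) $\Rightarrow$ (ii), I would proceed by duality. The pairing $f \mapsto \langle f, h_n\rangle_{\h^2}$ for $h_n \in H_{\Theta_n}$ depends only on $f$ modulo $\Theta_n\h^2$, so (i) produces, for any sequence in $l^2$, an $\h^\infty$ function $\phi$ with controlled norm whose residues encode the target coefficients. Testing $\phi$ against $\sum_m a_m h_m$ in $\h^2$ then yields both sides of the Riesz inequality \eqref{eqn:nik:riesz}, with constants depending only on the interpolation constant in (i).

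For (ii) $\Rightarrow$ (iii), I would argue by contradiction. Inserting the normalized reproducing kernels of $H_{\Theta_n}$ at a point $w \in \D$ into the upper side of \eqref{eqn:nik:riesz} already yields the Blaschke-type bound $\sum_n (1 - |\Theta_n(w)|^2) \leq C^2$ uniformly in $w$. To upgrade this to the stronger condition (iii), I would refine the test family: for an index $k$ where $|\Theta_k(w)|$ is small, replace the $k$-th test vector by a unit vector in $H_{\Theta_k}$ detecting the local size of $\Theta/\Theta_k$ near $w$, and apply the lower Riesz inequality to conclude $|\Theta(w)/\Theta_k(w)| \geq \delta$ uniformly.

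The main obstacle I expect is in (iii) $\Rightarrow$ (i), where one must execute the Wolff-type $\overline{\partial}$-estimate uniformly in the bounded sequence $(\phi_n)$, with constants depending only on $\delta$ from (iii) and $\sup_n\|\phi_n\|_\infty$; this requires careful handling of the measure built from $\overline{\partial}\Phi_0$ and $\Theta$, and of the interplay between inner-function factorization and the smooth cutoffs.
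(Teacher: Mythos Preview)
The paper does not prove this theorem: it is quoted from \cite[Th.~3.2.14]{nik} and used as a black box. There is therefore no proof in the paper to compare your proposal against; the only argument supplied is the short paragraph after the statement explaining why \eqref{eqn:nik:interp} is equivalent to the formulation $\phi-\phi_n\in\Theta_n\h^\infty$ appearing in \cite{nik}.

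For orientation, the route in \cite{nik} and \cite[Lec.~IX]{treatise} is operator-theoretic rather than $\overline{\partial}$-based: (ii) and (iii) are linked through the identification of the Riesz-system property with the strong-separation condition \eqref{eqn:strongmodel}, and the bridge between (i) and (ii)/(iii) is the commutant lifting theorem. Your cycle is a legitimate alternative in outline, but two places are thinner than they look. In (iii) $\Rightarrow$ (i), the sets $\{|\Theta_n|<\varepsilon\}$ need not be pairwise disjoint or even locally finite, so producing a smooth partition of unity $\{\chi_n\}$ whose $\overline{\partial}$-data has bounded Carleson norm from (iii) alone is not routine; this is exactly the obstacle you already flag, and it is real. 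In (i) $\Rightarrow$ (ii), your duality sketch does not connect cleanly: hypothesis (i) takes \emph{bounded} $\h^\infty$ data, not $l^2$ data, so one first needs the open mapping theorem to extract a uniform interpolation constant, and then one must convert that constant into both Riesz inequalities; as written, the step of testing $\phi$ against $\sum_m a_m h_m$ does not obviously deliver the lower bound in \eqref{eqn:nik:riesz}.
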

Note that \eqref{eqn:nik:interp} is stated in \cite[Th. 3.2.14]{nik} as 
\[
\phi-\phi_n\in\Theta_n\h^\infty,\qquad n\in\N.
\]
This is equivalent to \eqref{eqn:nik:interp}, since $\h^\infty$ is contained in $\h^2$ and since, if for any $n$ there exists a $g$ in $\h^2$ such that
\[
\theta_n~g=\phi-\phi_n\in\h^\infty,
\]
then the radial limit of $|g|$ on the unit circle must be bounded by $|\phi-\phi_n|$, since $\Theta_n$ is inner, and therefore $g$ is actually in $\h^\infty$.
A sequence of closed subspaces $(H_n)_{n\in\N}$ of a Hilbert space $\mathcal{H}$ that satisfies (ii) is called a {\bf Riesz system}, and the least $C$ for which \eqref{eqn:nik:riesz} holds is the \emph{Riesz bound} of the sequence. If the least $C$ such that the right hand side of \eqref{eqn:nik:riesz} holds is finite then $(H_n)_{n\in\N}$ is a {\bf Bessel systems} with \emph{Bessel bound} $C$. On the other hand, condition (iii) is equivalent to
\begin{equation}
\label{eqn:nik:gc}
\sup_{n\in\N}\prod_{j\ne n}|\Theta_j(z)|\geq\delta, 
\end{equation}
to hold uniformly in $z$. Indeed, condition (iii) above states that there exists a positive $\delta$ such that, for any $z$ in $\D$ and for any $0<\varepsilon<\delta$ there exists a $n$ in $\N$ such that 
\[
|\Theta_n(z)|\leq\frac{|\Theta(z)|}{\delta-\varepsilon},
\]
or, equivalently, such that
\[
\prod_{j\ne n}|\Theta_j(z)|\geq \delta-\varepsilon,
\]
which yields \eqref{eqn:nik:gc} (in case $\Theta_n(z)=0$, take a limit for $w$ going to $z$ and then use continuity).
\\
Moreover, condition (iii) is related in \cite[Lec. IX]{treatise} to separation conditions on the subspaces in $(H_{\Theta_n})_{n\in\N}$, being equivalent to asserting that the sine of the angle between any model space and the closure of the linear span of all the others is uniformly bounded below:
\begin{equation}
\label{eqn:strongmodel}
\inf_{n\in\N}\sin\left(H_{\Theta_n}, \overline{\underset{j\ne n}{\spa}}\{H_{\Theta_j}\}\right)>0.
\end{equation}
Here the sine between two closed sub-spaces $K_1$ and $K_2$ of a Hilbert space $\rkhs$ is the least sine of the angle between two vectors chosen in $K_1$ and $K_2$ or, equivalently, $$\sin(K_1, K_2)=||T||^{-1},$$ where
\[
\begin{split}
T\colon\spa\{K_1, K_2\}&\mapsto\spa\{K_1, K_2\}\\
T_{|K_1}=Id_{K_1}&\qquad T_{|K_2}=0.
\end{split}
\]
If \eqref{eqn:strongmodel} holds, we say that $(H_{\Theta_n})_{n\in\N}$ is {\bf strongly separated}, whereas {\bf weak separation} will correspond to an uniform bound from below for the angle between any pair of distinct model spaces:
\begin{equation}
\label{eqn:weakmodel}
\inf_{n\ne j}\sin(H_{\Theta_n}, H_{\Theta_j})>0.
\end{equation}
Suppose now that $\Theta_n=b_{\lambda_n}^{m_n}$, for some sequence $(\lambda_n)_{n\in\N}$ in the unit disc and some sequence $(m_n)_{n\in\N}$ of positive integers. Condition (i) of Theorem \ref{theo:nik} becomes then an \emph{interpolation property}: for any bounded sequence $(\phi_n)_{n\in\N}$ in $\h^\infty$ there exists a bounded analytic function $\phi$ that, for any $n$ in $\N$, agrees with $\phi_n$ at $\lambda_n$ up to its $m_n-1^\text{st}$ derivative. Theorem \ref{theo:nik} is therefore a great example of how the deep interconnection between operator theory and function theory greatly helps the studying and the understanding of  \emph{interpolating sequences}.
\\

 Let $\mathcal{H}_k$ be a reproducing kernel Hilbert space of analytic functions on a domain $D$ of $\C^d$, and let $\mathcal{M}_k$ be its multiplier algebra. A sequence $Z=(z_n)_{n\in\N}$ in $D$ is {\bf interpolating} for $\mathcal{M}_k$ if for any bounded sequence $(w_n)_{n\in\N}$ in $\C$ there exists a function $\phi$ in $\mathcal{M}_k$ such that $\phi(z_n)=w_n$ for any $n$ in $\N$. Intuitively,  an interpolating sequence is a separated sequence, as we need to be able to specify the values of $\phi$ \emph{arbitrarily} at the nodes $(z_n)_{n\in\N}$. It is also not surprising that the separation conditions we will look at depend on the kernel $k$: $Z$ is a {\bf weakly separated} sequence if there exists a positive $M$ such that, for any $n\ne j$ in $\N$, there exists a function $\phi_{n, j}$ whose norm in $\mathcal{M}_k$ doesn't exceed $M$ and that separates $z_n$ and $z_j$, that is, 
\[
\phi_{n, j}(z_n)=1\qquad\phi_{n, j}(z_j)=0.
\]
A celebrated work of Carleson,\cite{carlocm}, characterized interpolating sequences for $\h^\infty$:
\begin{theo}
\label{theo:carlo}
A sequence $\Lambda=(\lambda_n)_{n\in\N}$ is interpolating for $\h^\infty$ if and only if it is weakly separated and the measure
\[
\mu_\Lambda:=\sum_{n\in\N}(1-|\lambda_n|^2)\delta_{\lambda_n}
\]
satisfies the embedding condition
\begin{equation}
\label{eqn:cm}
||f||_{L^2(\D, \mu_\Lambda)}\leq C_\Lambda~||f||_{\h^2}\qquad f\in\h^2.
\end{equation}
\end{theo}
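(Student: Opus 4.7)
The plan is to treat the two directions of the equivalence separately. For the necessity, assume $\Lambda$ is interpolating for $\h^\infty$ with constant $M$. Weak separation is immediate from applying the interpolation property to the bounded sequence that is $1$ at index $n$ and $0$ elsewhere: this produces $\phi_{n,j}\in\h^\infty$ with $\|\phi_{n,j}\|_\infty\le M$, $\phi_{n,j}(\lambda_n)=1$, and $\phi_{n,j}(\lambda_j)=0$. For the Carleson embedding \eqref{eqn:cm}, apply the open mapping theorem to the (surjective) restriction map $R\colon\h^\infty\to l^\infty$, $Rf=(f(\lambda_n))_{n\in\N}$, to obtain a bounded right inverse; a standard Shapiro--Shields-type argument then upgrades this to show that the normalized Szegö kernels $\tilde{s}_{\lambda_n}:=(1-|\lambda_n|^2)^{1/2}s_{\lambda_n}$ form a Riesz sequence in $\h^2$. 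Since $(1-|\lambda_n|^2)|f(\lambda_n)|^2=|\langle f,\tilde{s}_{\lambda_n}\rangle|^2$, the Bessel inequality for $(\tilde{s}_{\lambda_n})_{n\in\N}$ is precisely \eqref{eqn:cm}.

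For sufficiency, the construction is explicit. Testing \eqref{eqn:cm} against $f\equiv1$ first gives $\sum_n(1-|\lambda_n|^2)<\infty$, so the Blaschke product $B:=\prod_n b_{\lambda_n}$ converges uniformly on compact subsets of $\D$. Setting $B_n:=B/b_{\lambda_n}$, the crucial intermediate step is to show
\[
\delta:=\inf_n|B_n(\lambda_n)|>0.
\]
Granted this estimate, given a bounded sequence $(w_n)_{n\in\N}$, the interpolating function is produced in the form
\[
\phi(z)=\sum_n w_n\,\frac{B_n(z)}{B_n(\lambda_n)}\,g_n(z),
\]
where the auxiliary factors $g_n\in\h^\infty$ satisfy $g_n(\lambda_n)=1$ and have controlled $\h^\infty$-norm, chosen to force absolute convergence of the series in $L^\infty$. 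In Carleson's original argument these correcting factors are built by solving a $\bar\partial$-equation on $\D$ whose datum is supported on carefully placed contours, and \eqref{eqn:cm} enters precisely as the Carleson-measure hypothesis which bounds the $L^\infty$ norm of the $\bar\partial$-solution.

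The main obstacle is the estimate $\inf_n|B_n(\lambda_n)|>0$, which extracts pointwise information on the Blaschke product from the integral/operator-theoretic hypothesis \eqref{eqn:cm} combined with pairwise weak separation. The standard approach is to fix $n$, write
\[
-\log|B_n(\lambda_n)|=\sum_{j\ne n}\bigl(-\log|b_{\lambda_j}(\lambda_n)|\bigr),
\]
and group the terms according to the pseudohyperbolic distance $\rho(\lambda_n,\lambda_j)=|b_{\lambda_j}(\lambda_n)|$. Weak separation bounds $\rho$ away from $0$, so no single summand blows up; meanwhile \eqref{eqn:cm} applied to the reproducing kernels $s_{\lambda_n}$ controls the number of $\lambda_j$ in each pseudohyperbolic annulus around $\lambda_n$ (this is the usual dyadic/Carleson-window decomposition of $\mu_\Lambda$). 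Summing the dyadic contributions then produces a finite upper bound for $-\log|B_n(\lambda_n)|$ uniform in $n$, and hence the desired lower bound on $|B_n(\lambda_n)|$.
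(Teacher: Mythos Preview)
The paper does not prove Theorem~\ref{theo:carlo}. It is stated as a classical result and attributed to Carleson via the citation \cite{carlocm}; there is no argument in the paper to compare your proposal against. The theorem functions purely as background for the paper's actual contributions (Theorems~\ref{theo:main} and~\ref{theo:carlomat}).

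That said, your sketch is a reasonable outline of the classical route: interpolating $\Rightarrow$ Riesz basis of normalized kernels $\Rightarrow$ weak separation and Bessel/Carleson-measure condition for necessity; and for sufficiency, weak separation plus the Carleson-measure condition $\Rightarrow$ uniform lower bound $\inf_n|B_n(\lambda_n)|>0$ $\Rightarrow$ interpolating. Two small remarks. First, the attribution of the $\overline{\partial}$-construction to ``Carleson's original argument'' is off: Carleson's 1958 proof that $\inf_n|B_n(\lambda_n)|>0$ implies interpolation was combinatorial and did not use $\overline{\partial}$; the explicit series $\sum_n w_n\,(B_n/B_n(\lambda_n))\,g_n$ with concrete damping factors $g_n$ is due to P.~Jones (and an earlier variant to Earl). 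Second, the step ``weak separation $+$ Carleson measure $\Rightarrow\inf_n|B_n(\lambda_n)|>0$'' is the genuinely delicate part, and your description (dyadic annuli, counting points via the one-box Carleson condition) is correct in spirit but would need the actual estimates to be a proof; in particular one must convert the Carleson embedding into a geometric bound on $\#\{j:\lambda_j\in Q\}$ for Carleson boxes $Q$ and then sum $-\log|b_{\lambda_j}(\lambda_n)|$ over dyadic shells, using weak separation only to handle the innermost shell.
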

A measure $\mu$ on a domain $D$ that embeds continuously a reproducing kernel Hilbert space $\mathcal{H}_k$ on $D$ into $L^2(D, \mu)$  is called a {\bf Carleson measure} for $\mathcal{H}_k$. One can find in \cite{carlocm} a characterization of Carleson measures for $\h^2$ that involves a one-box condition, and hence the hyperbolic geometry of the unit disc. It turns out that \eqref{eqn:cm} holds if and only if the sequence of all lines through the kernels $(s_{\lambda_n})_{n\in\N}$ (that is, the sequence one dimensional sub-spaces spanned by the kernels $s_{\lambda_n}$) is a Bessel system,\cite[Prop. 9.5]{john}, highlighting once again a correspondence between the hyperbolic geometry of the unit disc and the Euclidean geometry of the Hardy space. Moreover, such a characterization of Carleson measures in terms of Bessel systems allows to extend Theorem \ref{theo:carlo} to some multiplier algebra other than $\h^\infty$. For example, a class of multiplier algebras for which an analogous of Theorem \ref{theo:carlo} holds is the one associated with \emph{complete Pick kernels}. One of the most important properties that connects interpolating sequence to the study of related Hilbert spaces is the fact that, for any multiplier $\phi$ in $\mathcal{M}_k$, any kernel function $k_z$ in $\mathcal{H}_k$ is an eigenfunction of the adjoint of the multiplication operator $M_\phi$:
\begin{equation}
\label{eqn:eigen}
M^*_\phi(k_z)=\overline{\phi(z)}~k_z\qquad z\in D,
\end{equation}
as a straightforward computation using the property of adjoints shows. In particular, if $M_\phi$ is a contraction and $\phi(z_n)=w_n$ for any $n$ in $\N$, then the linear map $T$ from $S_Z:=\overline{\underset{n\in\N}{\spa}}\{k_{z_n}\}$ to itself given by
\begin{equation}
\label{eqn:contr}
T(k_{z_n}):=\overline{w_n}~k_{z_n}\qquad n\in\N
\end{equation}
is a contraction. A reproducing kernel Hilbert space is said to have the {\bf Pick property} if the existence of such a contraction $T$ is also a sufficient condition for the existence of a function $\phi$ in the unit ball of $\mathcal{M}_k$ such that $\phi(z_n)=w_n$. In particular, this says that $M^*_\phi$ is an extension of $T$ to $\mathcal{H}_k$ and $||M^*_\phi||=||T||$, which implies that any two disjoint sets of points $Z_1$ and $Z_2$ can be separated by a function in $\mathcal{M}_k$ of norm at most $M$ if and only if the angle between $S_{Z_1}$ and $S_{Z_2}$ in $\mathcal{H}_k$ is bounded below by $1/M$:
\begin{equation}
\label{eqn:anglespick}
\sup\left\{||\phi||_{\mathcal{M}_k}\,\big|\, \phi_{|Z_1}=1,\, \phi_{|Z_2}=0\right\}=\frac{1}{\sin(S_{Z_1}, S_{Z_2})}.
\end{equation}
Moreover,\cite[Th. 9.19]{john}, this implies that $Z$ is interpolating if and only if the sequence of lines through the kernels $(k_{z_n})_{n\in\N}$ is a Riesz system.\\
Since \eqref{eqn:contr} being a contraction is equivalent to the infinite matrix
\[
(1-\overline{w_n}w_j)k_{z_j}(z_n)\qquad n, j\in\N
\]
being positive semi-definite (i.e., all its finite principal minors are positive semi-definite), one can extend the Pick property to the case of \emph{matrix-valued} functions in $\mathcal{H}_k$, by defining $\mathcal{H}_k$ to have the \emph{$s\times t$ Pick property} if whenever $z_1, \dots, z_N$ are points in $D$ and $W_1, \dots, W_N$ are $s\times t$ matrices such that
\[
(Id-W_n^*W_j)k_{z_j}(z_n)\geq0
\]
then there exists a multiplier $\phi$ in the closed unit ball of 
\[
\mathcal{M}(\mathcal{H}_k\otimes\C^t, \mathcal{H}_k\otimes\C^s):=\left\{{\bf\phi}=(\phi_{l, r})\,|\, l=1, \dots, s,\, r=1,\dots t,\, \sup_{{\bf f}\ne0}\frac{||\phi{\bf f}||_{\mathcal{H}_k\otimes\C^s}}{||{\bf f}||_{\mathcal{H}_k\otimes\C^t}}<\infty\right\}
\]
such that $\phi(z_i)=W_i$, for $i=1, \dots, N$. We say that $\mathcal{H}_k$ has the {\bf complete Pick property} if it has the $s\times t$ Pick property for any positive integers $s$ and $t$. The Hardy space $\h^2$ has the complete Pick property, as well as some of its natural generalizations, such as the reproducing kernel Hilbert spaces $\mathcal{H}_s$ on $\D$, $-1\leq s\leq0$, defined by the kernels
\[
k^s_w(z):=\sum_{n=0}^\infty (n+1)^s(\overline{w}z)^n\qquad z, w\in\D
\] 
and the \emph{Drury-Arveson space} $\h^2_d$ on the $d$-dimensional unit ball $\mathbb{B}_d$ defined by the kernel
\[
b_w(z):=\frac{1}{1-\braket{z, w}},\qquad z, w\in\mathbb{B}_d.
\]
For instance, see \cite[Ch. 7]{john}. In a recent work, \cite{pk}, Aleman, Hartz, M$^{\text{c}}$Carthy and Richter extended Theorem \ref{theo:carlo} by showing that any weakly separated sequence $Z$ on a domain $D$ such that the sequence of lines through the kernels $(k_{z_n})_{n\in\N}$ is a Bessel system is an interpolating sequence for $\mathcal{M}_k$, provided that $\mathcal{H}_k$ has the complete Pick property. This is done by using the recent positive answer to the \emph{Feichtinger conjecture}, which states that any Bessel system of one-dimensional subspaces is the disjoint union of finitely many Riesz systems.  It has been shown, \cite{casazza} \cite{weaver}, that the Feichtinger conjecture is equivalent to many other conjectures in operator theory, including the Paving conjecture, who had been proved by the well-known work of Marcus, Spielmann and Srivastava \cite{marcus}.\\

In \cite{intmat}, the author asked whether the positive answer to the Feichtinger conjecture can be extended to multi-dimensional model spaces of $\h^2$, that is, if any Bessel system of model spaces is the disjoint union of finitely many Riesz systems. We show in Section \ref{sec:feicmodel} that this is not the case, though any Bessel systems of model spaces satisfying \eqref{eqn:weakmodel} is in fact a Riesz system:
\begin{theo}
\label{theo:main}
Any weakly separated Bessel system of model spaces in $\h^2$ is a Riesz system.
\end{theo}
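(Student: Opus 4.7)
The plan is to invoke Theorem \ref{theo:nik} and verify condition (iii), or equivalently \eqref{eqn:nik:gc}. The Bessel hypothesis, via the duality between the synthesis operator $(f_n)_{n\in\N}\mapsto \sum_n f_n$ on $\bigoplus_n H_{\Theta_n}$ and the analysis operator $f\mapsto(P_{H_{\Theta_n}}f)_n$, gives $\sum_n\|P_{H_{\Theta_n}}f\|^2\leq C^2\|f\|^2$ for every $f\in\h^2$. Choosing $f=s_z$ and using $\|P_{H_{\Theta_n}}s_z\|^2=(1-|\Theta_n(z)|^2)/(1-|z|^2)$ produces the uniform pointwise summability
\[
\sum_{n\in\N}\bigl(1-|\Theta_n(z)|^2\bigr)\leq C^2,\qquad z\in\D,
\]
which, after normalizing each $\Theta_n$ so that $\Theta_n(0)\geq 0$, also yields the uniform convergence of $\Theta=\prod_n\Theta_n$ on compacta of $\D$ required to apply Theorem \ref{theo:nik}.

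The main step is to exploit weak separation to produce $\eta_0>0$ such that, at every $z\in\D$, at most one index $n$ satisfies $|\Theta_n(z)|<\eta_0$. Writing $k_z^n=s_z-\overline{\Theta_n(z)}\Theta_n s_z$ for the reproducing kernel of $H_{\Theta_n}$ at $z$, a direct computation gives $\|k_z^n\|^2=(1-|\Theta_n(z)|^2)\|s_z\|^2$ and, for the unit kernel $\tilde k_z^n$, $\langle \tilde k_z^n, s_z/\|s_z\|\rangle=\sqrt{1-|\Theta_n(z)|^2}$. Decomposing $\tilde k_z^n$ as the sum of its component along $s_z/\|s_z\|$ and an orthogonal remainder (of norm $|\Theta_n(z)|$) and applying Cauchy--Schwarz to two such kernels gives
\[
|\langle \tilde k_z^n,\tilde k_z^j\rangle|\geq\sqrt{(1-|\Theta_n(z)|^2)(1-|\Theta_j(z)|^2)}-|\Theta_n(z)||\Theta_j(z)|.
\]
If both $|\Theta_n(z)|,|\Theta_j(z)|\leq\eta$, the right-hand side is at least $1-2\eta^2$, so $\sin(H_{\Theta_n},H_{\Theta_j})\leq 2\eta$; the weak separation constant $\mu$ of \eqref{eqn:weakmodel} forces $\eta\geq\mu/2$, so the choice $\eta_0:=\mu/3$ works.

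It remains to combine the two pieces. Fix $z\in\D$, let $n_*(z)$ be the unique (if any) index with $|\Theta_{n_*(z)}(z)|<\eta_0$ and note that every $j\neq n_*(z)$ has $|\Theta_j(z)|\geq\eta_0$. The chord inequality $\log(1-x)\geq x\log(\eta_0^2)/(1-\eta_0^2)$, valid on $[0,1-\eta_0^2]$ by concavity of $\log(1-x)$, combined with the first-step summability gives
\[
\log\prod_{j\neq n_*(z)}|\Theta_j(z)|^2\geq \frac{\log(\eta_0^2)}{1-\eta_0^2}\sum_{j\neq n_*(z)}\bigl(1-|\Theta_j(z)|^2\bigr)\geq \frac{C^2\log(\eta_0^2)}{1-\eta_0^2},
\]
so $\prod_{j\neq n_*(z)}|\Theta_j(z)|\geq\eta_0^{C^2/(1-\eta_0^2)}=:\delta$, verifying condition (iii) of Theorem \ref{theo:nik}. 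The crux of the argument, and the place I expect to do the bulk of the work, is the reproducing-kernel estimate of the second paragraph: the inequality $\sin(H_{\Theta_n},H_{\Theta_j})\lesssim\max\{|\Theta_n(z)|,|\Theta_j(z)|\}$ is what promotes the purely pairwise weak separation hypothesis to the joint separation needed for (iii), and it is precisely this geometric step that allows the argument to bypass the Feichtinger conjecture governing the one-dimensional case.
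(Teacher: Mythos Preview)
Your argument is correct and follows the same overall route as the paper: extract the uniform bound $\sum_n(1-|\Theta_n(z)|^2)\le C^2$ from the Bessel hypothesis (this is exactly Remark~\ref{rem:proj} applied to $\hat s_z$, via Theorem~\ref{theo:distinner}), use weak separation to show that at most one factor $|\Theta_n(z)|$ can be small at any fixed $z$, and then convert the sum bound into a lower bound for $\prod_{j\ne n_*(z)}|\Theta_j(z)|$, which is \eqref{eqn:nik:gc}.

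The one substantive difference is in the middle step. The paper invokes the upper bound $\sin(H_{\Theta_1},H_{\Theta_2})\le c\,\inf_z\max\{|\Theta_1(z)|,|\Theta_2(z)|\}$ of Theorem~\ref{theo:treatise} (quoted from Nikol'ski\u\i) to conclude that $\inf_{j\ne n_z}|\Theta_j(z)|$ is bounded below. You instead prove this inequality directly, with explicit constant $c=2$, by exhibiting the nearly parallel unit vectors $\tilde k_z^n\in H_{\Theta_n}$ and $\tilde k_z^j\in H_{\Theta_j}$ and estimating their inner product. This reproducing-kernel computation is a clean self-contained substitute for the citation and in fact recovers the upper half of Theorem~\ref{theo:treatise}. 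Your explicit chord inequality for the passage from $\sum(1-|\Theta_j(z)|^2)$ to $\prod|\Theta_j(z)|$, and your remark on normalizing the $\Theta_n$ so that $\prod_n\Theta_n$ converges, make precise two points the paper leaves implicit.
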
\,\\

The motivation for Theorem \ref{theo:main} is the study of \emph{interpolating sequences of matrices} introduced by the author in \cite{intmat}. As Section \ref{sec:intmat} explains in details, we say that a sequence of square matrices $A=(A_n)_{n\in\N}$ with eigenvalues in $\D$ is interpolating if, for any bounded sequence $(w_n)_{n\in\N}$ in $\C$, there exists a bounded holomophic function $\phi$ such that
\[
\phi(A_n)=w_n~Id.
\]
 If $P_n$ is, for any $n$ in $\N$, the minimal polynomial of $A_n$, then
\begin{equation}
\label{eqn:Bn}
B_n(z):=\frac{P_n(z)}{\overline{P_n}\left(\frac{1}{\overline{z}}\right)}\qquad z\in\D
\end{equation}
is a Blaschke product with zeros at the eigenvalues of $A$, and any $\h^2$ function that vanishes at $A_n$ is a multiple of $B_n$. Let $H=(H_n)_{n\in\N}$ be the sequence of model spaces associated to $(B_n)_{n\in\N}$. The author extended  in \cite[Th. 6.6]{intmat} Theorem \ref{theo:carlo} to sequences of matrices of uniformly bounded dimensions, by showing that $A$ is interpolating  if and only if $H$ is a weakly separated Bessel system. Theorem \ref{theo:main} can be rephrased to drop the extra assumption on the sizes of the matrices in $A$:
\begin{theo}
\label{theo:carlomat}
$A$ is interpolating if and only if the sequence $H$ is a weakly separated Bessel system. 
\end{theo}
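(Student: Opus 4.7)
The forward implication is contained in the proof of \cite[Th.\ 6.6]{intmat}: the open mapping and duality arguments used there do not rely on the uniform dimension hypothesis and carry over verbatim to the present setting. The new content is therefore the reverse implication, which I plan to obtain by feeding Theorem~\ref{theo:main} into Nikolski's Theorem~\ref{theo:nik}.

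Assume that $H$ is a weakly separated Bessel system. Theorem~\ref{theo:main} immediately upgrades it to a Riesz system, which is exactly condition (ii) of Theorem~\ref{theo:nik}. Before invoking the equivalence with condition (i) one must know that, up to possibly multiplying each $B_n$ by a unimodular constant (which does not change $H_{B_n}$), the infinite product $\prod_n B_n$ converges uniformly on compact subsets of $\D$. This follows from the Bessel hypothesis alone: testing the Bessel inequality against the normalized reproducing kernel of $H_{B_n}$ at the origin, whose value at $0$ is $\sqrt{1-|B_n(0)|^2}$, and pairing with the evaluation functional at $0$ yields $\sum_n(1-|B_n(0)|^2)<\infty$; after the unimodular normalization, a standard Schwarz--Pick argument turns this pointwise summability into the Cauchy condition $\sum_n|1-B_n(w)|<\infty$ uniformly for $w$ on compacta, which is the required convergence.

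Condition (i) of Theorem~\ref{theo:nik} then produces, for every bounded sequence $(\phi_n)$ in $\h^\infty$, a function $\phi\in\h^\infty$ with $\phi-\phi_n\in B_n\h^2$ for all $n$. Specializing to the constant sequence $\phi_n\equiv w_n$ associated to a bounded scalar sequence $(w_n)$ gives $\phi\in\h^\infty$ such that $\phi-w_n$ vanishes at every zero of $B_n$ with multiplicity at least the one prescribed by the minimal polynomial $P_n$. By the construction \eqref{eqn:Bn} of $B_n$ from $P_n$ and the holomorphic functional calculus for matrices, this is exactly the statement $\phi(A_n)=w_n\,Id$, so $A$ is interpolating. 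The heavy lifting is packaged into Theorem~\ref{theo:main}, and apart from the verification of product convergence from the Bessel bound the remaining steps are a chain of known equivalences and routine bookkeeping; the only slightly delicate identification is the one between $B_n\h^2$-membership and the matrix-level equation $\phi(A_n)=w_n\,Id$, which is a direct consequence of the Jordan structure of $A_n$ encoded in $P_n$.
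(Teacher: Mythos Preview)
Your proposal is correct and follows essentially the same route as the paper: apply Theorem~\ref{theo:main} to upgrade the weakly separated Bessel system to a Riesz system, then use Nikol'ski\u i's equivalence (Theorem~\ref{theo:nik}) to obtain the interpolation property, while the forward implication is already in \cite{intmat}. The only difference is that the paper packages the passage from ``Riesz system'' to ``interpolating'' by citing the equivalence (i)$\Leftrightarrow$(iii) of Theorem~\ref{theo:intmat}, whereas you unpack it via Theorem~\ref{theo:nik} directly and supply the (correct) verification that $\prod_n B_n$ converges from the Bessel bound---a point the paper leaves implicit.
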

Section \ref{sec:proof} deals with the proof of Theorem \ref{theo:main}. Section \ref{sec:intmat} provides a brief summary of the content of \cite{intmat} and gives an argument for Theorem \ref{theo:carlomat}, together with an explicit example of an interpolating sequence of matrices. We also give in Section \ref{sec:feicmodel} an example of a sequence of matrices whose associated sequence of model spaces $(H_n)_{n\in\N}$ is a Bessel system which can not be written as the disjoint union of finitely many Riesz systems, giving a negative answer to a question posed by the author in \cite{intmat}.\\

The author would like to thank John M$^{\text{c}}$Carthy for the valuable suggestions given during all the conversations that led to this work. The author is also indebted to the reviewer for some extremely important remarks and suggestions.
\section{The Proof of the Main Result}
\label{sec:proof}
 The first main tool for the proof of Theorem \ref{theo:main} can be found in \cite[Lec. IX]{treatise}, and relates the sine of the angle between two model spaces $H_{\Theta_1}$ and $H_{\Theta_2}$ with the constant in Carleson corona Theorem:
\begin{theo}
\label{theo:treatise}
There exists a constant $c\geq1$ such that, for any $\Theta_1$ and $\Theta_2$ inner functions on $\D$ such that 
\[
\inf_{z\in\D}\max\{|\Theta_1(z)|, |\Theta_2(z)|\}=\delta\geq0
\]
then
\[
\frac{\delta^3}{c}\leq\sin(H_{\Theta_1}, H_{\Theta_2})\leq c\delta.
\]
\end{theo}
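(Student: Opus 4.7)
The plan is to prove the two inequalities separately, with the upper bound coming from explicit reproducing-kernel geometry and the lower bound from the quantitative Carleson corona theorem applied to the pair $(\Theta_1,\Theta_2)$. I expect the lower bound to absorb essentially all of the work, because the cube appearing in $\delta^3/c$ mirrors the standard quantitative constant for the two-function corona problem.

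For the upper bound I would start by recalling that the reproducing kernel of $H_\Theta$ at $z\in\D$ has the explicit form $k^\Theta_z(w)=(1-\overline{\Theta(z)}\Theta(w))/(1-\overline{z}w)$ and equals $P_{H_\Theta}s_z$. A one-line computation then gives $s_z-k^\Theta_z=\overline{\Theta(z)}\Theta s_z$, which has norm $|\Theta(z)|\cdot\|s_z\|$ because $\Theta$ is inner, so the normalized kernel $k^\Theta_z/\|k^\Theta_z\|$ lies at distance at most $\sqrt{2}\,|\Theta(z)|$ from $s_z/\|s_z\|$. Choosing any $z$ that nearly attains the infimum $\delta$ forces both $|\Theta_1(z)|$ and $|\Theta_2(z)|$ to be at most $\delta+\varepsilon$, and the triangle inequality then produces unit vectors $v_j\in H_{\Theta_j}$ with $\|v_1-v_2\|\leq 2\sqrt{2}\,(\delta+\varepsilon)$. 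Since the sine of the angle between two unit vectors in a Hilbert space is dominated by their Euclidean distance, this yields $\sin(H_{\Theta_1},H_{\Theta_2})\leq c\delta$.

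For the lower bound I would apply the Carleson corona theorem to $(\Theta_1,\Theta_2)$: the assumption $\max\{|\Theta_1(z)|,|\Theta_2(z)|\}\geq\delta$ on $\D$ produces $f_1,f_2\in\h^\infty$ with $\Theta_1 f_1+\Theta_2 f_2=1$ and $\|f_j\|_\infty\leq C\delta^{-3}$. Given $h_1\in H_{\Theta_1}$ and $h_2\in H_{\Theta_2}$, the chain of equalities
\[
\|h_1\|^2=\langle h_1,(\Theta_1 f_1+\Theta_2 f_2)h_1\rangle=\langle h_1,\Theta_2 f_2 h_1\rangle=\langle h_1+h_2,\Theta_2 f_2 h_1\rangle
\]
uses $h_1\perp\Theta_1\h^2$ in the second step and $h_2\perp\Theta_2\h^2$ in the third, together with $f_j h_1\in\h^2$ whenever $f_j\in\h^\infty$. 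Cauchy--Schwarz, combined with the fact that multiplication by the inner function $\Theta_2$ is an isometry on $\h^2$, then yields $\|h_1\|\leq\|f_2\|_\infty\|h_1+h_2\|$. This exactly bounds the norm of the oblique projection $T$ appearing in the definition of $\sin(H_{\Theta_1},H_{\Theta_2})$ by $C\delta^{-3}$, which is the desired inequality.

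The main obstacle is clearly the quantitative corona estimate underlying the lower bound: the Hilbert-space manipulations with $T$ are routine, but the cube exponent is exactly what available corona theory delivers. A secondary technical point worth checking at the outset is that, under the standing hypothesis $\delta>0$, the pair $(\Theta_1,\Theta_2)$ admits no common inner divisor --- any such divisor would force $\max\{|\Theta_1|,|\Theta_2|\}$ to tend to zero somewhere in $\D$ --- so $H_{\Theta_1}\cap H_{\Theta_2}=\{0\}$ and $T$ is unambiguously defined on the algebraic sum $H_{\Theta_1}+H_{\Theta_2}$ before being extended by the bound just obtained.
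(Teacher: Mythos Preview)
The paper does not supply its own proof of this statement: it is quoted from Nikol'ski\u{\i}'s \emph{Treatise on the Shift Operator} (Lecture~IX) and used as an input. So there is no ``paper's proof'' to compare against, only the classical argument in that reference.

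Your proposal is correct and is, in fact, precisely the classical argument. The upper bound via the model-space reproducing kernels $k^\Theta_z=(1-\overline{\Theta(z)}\Theta)s_z$ is the standard way to produce nearly parallel unit vectors in $H_{\Theta_1}$ and $H_{\Theta_2}$ at a point where both $|\Theta_j(z)|$ are close to $\delta$; your distance computation $\|\hat{s}_z-\hat{k}^\Theta_z\|\leq\sqrt{2}\,|\Theta(z)|$ is right. For the lower bound, the chain
\[
\|h_1\|^2=\langle h_1,(\Theta_1 f_1+\Theta_2 f_2)h_1\rangle=\langle h_1,\Theta_2 f_2 h_1\rangle=\langle h_1+h_2,\Theta_2 f_2 h_1\rangle
\]
is exactly the device used in Nikol'ski\u{\i}'s book to bound the oblique projection by the corona solution norm, and your justification of each equality is accurate. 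The cube $\delta^3$ in the statement is nothing more than the quantitative corona constant for two generators, as you identified. Your closing remark that $\delta>0$ forces $H_{\Theta_1}\cap H_{\Theta_2}=\{0\}$ is also correct (in fact it already follows from the bound $\|h_1\|\leq\|f_2\|_\infty\|h_1+h_2\|$ itself), so $T$ is well defined.
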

We are also going to use the following re-statement of the Bessel system condition:
\begin{prop}
\label{prop:bessel}
A sequence $(H_n)_{n\in\N}$ of closed sub-spaces of a Hilbert space $\mathcal{H}$ is a Bessel system with Bessel bound $M$ if and only if, for any sequence $(h_n)_{n\in\N}$ of unit vectors such that $h_n$ belongs to $H_n$ for any $n$ in $\N$,
\begin{equation}
\label{eqn:bessel:sup}
\sup_{||x||=1}\sum_{n\in\N}|\braket{x, h_n}|^2=M^2.
\end{equation}
\end{prop}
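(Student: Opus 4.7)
The plan is to extract the proposition from the standard operator-theoretic identity $\|T\|=\|T^*\|$ applied to the synthesis operator of a sequence of unit vectors. Fix a sequence $(h_n)_{n\in\N}$ of unit vectors with $h_n\in H_n$ and, on finitely supported scalar sequences, define $T\colon\ell^2\to\mathcal{H}$ by $T(a_n)_{n\in\N}:=\sum_n a_n h_n$. Its adjoint is the analysis operator $T^*\colon\mathcal{H}\to\ell^2$ given by $T^*x=(\braket{x, h_n})_{n\in\N}$, as a direct computation shows. By definition of the operator norm, $\|T\|^2$ equals the optimal constant $C^2$ in the scalar Bessel inequality $\|\sum_n a_n h_n\|^2\leq C^2\sum_n|a_n|^2$, while $\|T^*\|^2=\sup_{\|x\|=1}\sum_n|\braket{x, h_n}|^2$.

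First, I would invoke $\|T\|=\|T^*\|$ to conclude that, for each fixed choice of unit vectors $(h_n)$, the best Bessel constant of that particular choice coincides with the supremum appearing in \eqref{eqn:bessel:sup}. Second, the Bessel bound $M$ of the system of subspaces $(H_n)_{n\in\N}$ is, by its very definition, the supremum of these best constants as $(h_n)$ varies over all sequences of unit vectors with $h_n\in H_n$; taking that supremum on both sides of the identity produces the desired equality with $M^2$. The convergence of $\sum_n a_n h_n$ in $\mathcal{H}$ for arbitrary $(a_n)\in\ell^2$ whenever the Bessel inequality holds on finitely supported sequences is standard, following from the Cauchy criterion applied to partial sums with the uniform operator norm bound, so extending $T$ from finitely supported sequences to all of $\ell^2$ presents no issue.

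I do not anticipate any substantial obstacle: the statement is essentially the assertion that the two quantities appearing in the equivalence are dual formulations of the same operator norm. The only point requiring care is matching the two conventions — the Bessel bound of the subspaces on one hand, and the supremum over $\|x\|=1$ of $\sum_n|\braket{x, h_n}|^2$ on the other — as suprema taken over the same family of admissible sequences of unit vectors $h_n\in H_n$.
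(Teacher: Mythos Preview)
Your argument is correct and is essentially the paper's proof repackaged at a higher level of abstraction: the paper establishes the two inequalities directly---using $\|y\|=\sup_{\|x\|=1}|\braket{x,y}|$ together with Cauchy--Schwarz for one direction, and the particular choice $a_n=\braket{x,h_n}$ for the other---which is exactly the computation that underlies $\|T\|=\|T^*\|$ for the synthesis operator you introduce. Your formulation is a bit cleaner in that it names the operator and quotes the norm identity rather than unwinding it, but the mathematical content is the same.
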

\begin{proof}
The idea of the proof comes from \cite[Prop. 9.5]{john}. Choose for any $n$ in $\N$ a unit vector $h_n$ in $H_n$, and suppose first that \eqref{eqn:bessel:sup} holds. Then, for any finitely supported $(a_n)_{n\in\N}$,
\[
\begin{split}
\left|\left|\sum_{n\in\N} a_nh_n\right|\right|^2&=\sup_{||x||=1}\left|\braket{x, \sum_{n\in\N} a_nh_n }\right|^2\\
&=\sup_{||x||=1}\left|\sum_{n\in\N}\braket{x, h_n}~\overline{a_n}\right|^2\\
&\leq M^2 \sum_{n\in\N}|a_n|^2,
\end{split}
\]
thanks to \eqref{eqn:bessel:sup} and Cauchy-Schwartz's inequality. The case of infinitely supported sequences $(a_n)_{n\in\N}$ in $l^2$ follows by considering the limit case of finitely supported sequences.\\
Conversely,  let $M$ be the Bessel bound for the sequence $(H_n)_{n\in\N}$, and fix a unit vector $x$ in $\mathcal{H}$ . Then set $a_n=\braket{x, h_n}$, and observe that
\[
\begin{split}
\sum_{n\in\N}|\braket{x, h_n}|^2=&\braket{x, \sum_{n\in\N}a_n h_n}\\
\leq&\left|\left|\sum_{n\in\N} a_nh_n\right|\right|\\
\leq&M\left(\sum_{n\in\N}|a_n|^2\right)^\frac{1}{2}\\
=&M\left(\sum_{n\in\N}|\braket{x, h_n}|^2\right)^\frac{1}{2}.
\end{split}
\]
By truncating the sum $\underset{n\in\N}\sum|\braket{x, h_n}|^2$, one sees that this implies that $(a_n)_{n\in\N}$ must be square summable, and this concludes the proof.
\end{proof}
\begin{rem}
\label{rem:proj}
Fixed $x$ in $\mathcal{H}$, we can choose $(h_n)_{n\in\N}$ so that the sum in \eqref{eqn:bessel:sup} attains its maximum. We can actually maximizes each term of the sum, by setting $h_n$ to be the orthogonal projection onto $H_n$ of $x$, divided by its norm:
\[
f_n(x):=\frac{P_{H_n}(x)}{\left|\left|P_{H_n}(x)\right|\right|}.
\] 
Proposition \ref{prop:bessel} then says that $(H_n)_{n\in\N}$ has a finite Bessel bound $M$ if and only if 
\begin{equation}
\label{eqn:bessel:proj}
\sup_{||x||=1}\sum_{n\in\N}(1-\dist_{\h^2}^2(x, H_n))=\sup_{||x||=1}\sum_{n\in\N}|\braket{x, f_n(x)}|^2= M^2
\end{equation}
\end{rem}
Lastly, we are going to use the one dimensional case of \cite[Th. 5.1]{intmat}. For any $x$ in a Hilbert space $\mathcal{H}$ let $\hat{x}$ denote its normalization $x/||x||$. 
\begin{theo}
\label{theo:distinner}
For any inner function $\Theta$ on $\D$,
\[
\dist_{\h^2}(\hat{s_z}, H_\Theta)=|\Theta(z)|\qquad z\in\D.
\]
\end{theo}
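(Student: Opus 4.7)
The plan is to compute the distance $\dist_{\h^2}(\hat{s_z}, H_\Theta)$ by passing to the orthogonal complement: since $H_\Theta = \h^2 \ominus \Theta\h^2$, the distance from any unit vector $x$ to $H_\Theta$ equals $\|P_{\Theta\h^2}(x)\|$. So everything reduces to identifying the orthogonal projection of $s_z$ onto $\Theta\h^2$ and then dividing by $\|s_z\|$.

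The first step is to guess the projection and verify it. Because $\Theta$ is inner, multiplication by $\Theta$ is an isometry on $\h^2$, so the natural candidate is $P_{\Theta\h^2}(s_z) = \overline{\Theta(z)}\,\Theta\, s_z$. This already lies in $\Theta\h^2$, so it suffices to check that the residual $s_z - \overline{\Theta(z)}\,\Theta s_z$ is orthogonal to $\Theta\h^2$. For any $g \in \h^2$, the reproducing property gives
\[
\braket{s_z, \Theta g} = \overline{\Theta(z)\, g(z)},
\]
while the isometric property of $M_\Theta$ yields
\[
\braket{\overline{\Theta(z)}\,\Theta s_z, \Theta g} = \overline{\Theta(z)}\braket{s_z, g} = \overline{\Theta(z)}\,\overline{g(z)},
\]
so the two inner products agree and the residual is indeed orthogonal to $\Theta\h^2$.

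Once the projection is identified, the conclusion is immediate: using once more that $M_\Theta$ is an isometry on $\h^2$,
\[
\|P_{\Theta\h^2}(s_z)\| = |\Theta(z)|\,\|\Theta s_z\| = |\Theta(z)|\,\|s_z\|,
\]
and dividing through by $\|s_z\|$ gives $\dist_{\h^2}(\hat{s_z}, H_\Theta) = |\Theta(z)|$. There is no real obstacle here; the only delicate point is remembering that the orthogonal decomposition $\h^2 = H_\Theta \oplus \Theta\h^2$ is valid precisely because $\Theta$ is inner, which is exactly what makes $M_\Theta$ an isometry and hence makes $\Theta\h^2$ a closed subspace on which the projection formula above works.
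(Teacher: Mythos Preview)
Your proof is correct and follows essentially the same approach as the paper: both compute $\dist_{\h^2}(\hat{s}_z,H_\Theta)=\|P_{\Theta\h^2}(\hat{s}_z)\|$ and identify the projection as $\overline{\Theta(z)}\,\Theta s_z$. The paper packages this more compactly via the identities $P_{\Theta\h^2}=M_\Theta M_\Theta^*$ and $M_\Theta^*s_z=\overline{\Theta(z)}s_z$, whereas you verify the projection formula directly against $\Theta g$; the content is the same.
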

\begin{proof}
Since the orthogonal projection onto $\Theta\h^2=\h^2\ominus H_\Theta$ is $M_\Theta M_{\Theta}^*$ one gets
\[
\dist_{\h^2}(\hat{s}_z, H_\Theta)=||M_\Theta M^*_\Theta(\hat{s}_z)||=||M^*_\Theta(\hat{s}_z)||=|\Theta(z)|.
\]
\end{proof}
We are now ready to prove Theorem \ref{theo:main}:
\begin{proof}[Proof of Theorem \ref{theo:main}]
Let $(H_{\Theta_n})_{n\in\N}$ be a weakly separated Bessel system. We will show that \eqref{eqn:nik:gc} holds, and Theorem \ref{theo:nik} will conclude the proof. Thanks to \eqref{eqn:bessel:proj}, for any fixed $z$ in $\D$ there exists a positive integer $n_z$ that minimizes the distance between $\hat{s_z}$ and $H_n$:
\[
\dist_{\h^2}(\hat{s_z}, H_{n_z})=\min_{n\in\N}\dist_{\h^2}(\hat{s_z}, H_n),
\]
which thanks to Theorem \ref{theo:distinner} becomes 
\[
|\Theta_{n_z}(z)|=\min_{n\in\N}|\Theta_n(z)|.
\]
Therefore, by Theorem \ref{theo:treatise} and weak separation we have that 
\[
\inf_{z\in\D}\inf_{n\ne n_z}|\Theta_n(z)|>0,
\]
which implies that 
\[
\prod_{n \ne n_z}|\Theta_n(z)|=\sup_{n\in\N}\prod_{j\ne n}|\Theta_j(z)|
\]
is bounded below uniformly on $z$ if and only if 
\[
\sum_{n\ne n_z}(1-|\Theta_n(z)|^2)=\sum_{n\ne n_z} (1-\dist_{\h^2}^2(\hat{s_z}, H_{\Theta_n}))
\]
is uniformly bounded on $z$, which is true thanks to Remark \ref{rem:proj}.
\end{proof}
Observe that the proof of Theorem \ref{theo:main} used a weaker version of the Bessel system condition, in which the sup in \eqref{eqn:bessel:sup} is taken only on normalized kernel functions, rather than on all unit vectors in $\h^2$. It remains open for us whether such a weaker condition is enough to characterize Bessel systems of model spaces, and a positive answer for the special case in which each $\Theta_n$ is a Blaschke product would be of great interest for us, as this is the case that we consider when we apply Theorem \ref{theo:main} to interpolating sequences of matrices, as we will see in Section \ref{sec:intmat}:
\begin{question}
\label{q:bessel}
Is any sequence of model spaces $(H_{\Theta_n})_{n\in\N}$ in $\h^2$ satisfying
\begin{equation}
\label{eqn:q:bessel}
\sup_{z\in\D}\sum_{n\in\N}1-|\Theta_n(z)|^2<\infty
\end{equation}
a Bessel system? Is it true if $\Theta_n$ is, for any positive integer $n$, a Blaschke product?
\end{question}
\begin{rem}
\label{rem:bessel:lines}
Question \ref{q:bessel} has a positive answer whenever $\Theta_n=b_{\lambda_n}$ is, for any positive integer $n$, a degree-one Blaschke factor at a point $\lambda_n$, and therefore whenever $H_{\Theta_n}$ is the line spanned by the Szegö kernel at $\lambda_n$, \cite[Ch. VI, Lemma 3.3]{gar}.
\end{rem}
\section{Interpolating Matrices}
\label{sec:intmat}
The motivation for Theorem \ref{theo:main} is the study of \emph{interpolating sequences of matrices}. The author asked in \cite{intmat} whether some well known characterizations for interpolating sequences for $\h^\infty$ such as Theorem \ref{theo:carlo} extend to sequences of square matrices $A=(A_n)_{n\in\N}$, without assuming any restriction on the sequence of their dimensions. The fact that a square matrix might have a non trivial algebraic structure invariant under holomorphic functions (its eigenspaces, for example), makes an interpolation problem using matrices a bit trickier than the classic one: given two points $\lambda$ and $w$ in $\D$ there is no function $\phi$ in $\h^\infty$ that maps
\[
A=
\begin{bmatrix}
\lambda & 0\\
0 & \lambda
\end{bmatrix}
\]
to
\[
W=
\begin{bmatrix}
w & 1\\
0 & w
\end{bmatrix},
\]
although both $A$ and $W$ are bounded in the operator norm and the constant function $w$ is a contraction in $\h^\infty$ that sends the spectrum of $A$ to the spectrum of $W$. Since choosing bounded targets in the operator norm makes even a one-point interpolation problem impossible to solve via bounded analytic functions, in order to define interpolating matrices one has to identify a target with a bounded sequence in $\h^\infty$, \cite[Def. 1.1]{intmat}:
\begin{defi}[Interpolating Matrices]
\label{defi:intmat}
A sequence $A=(A_n)_{n\in\N}$ of square matrices with spectra in the open unit disc  is interpolating for $\h^\infty$ if for any bounded sequence $(\phi_n)_{n\in\N}$ in $\h^\infty$ there exists a $\phi$ in $\h^\infty$ such that
\[
\phi(A_n)=\phi_n(A_n),\qquad n\in\N.
\]
\end{defi}
Equivalently, \cite[Th. 4.1]{intmat}, one can choose diagonal targets, and define $A$ to be interpolating if for any bounded sequence $(w_n)_{n\in\N}$ in $\C$ there exists a bounded analytic function $\phi$ such that
\[
\phi(A_n)=w_n~Id,\qquad n\in\N.
\]
Here an analytic function on the unit disc is applied to a square matrix via the Riesz-Dunford functional calculus, hence the assumption on the spectra of the matrices in $A$. In order to characterize interpolating sequences of matrices, a rather trivial yet important observation is that, for any pair of similar matrices $M$ and $N$ with spectra in $\D$ and for any holomorphic function $f$ on the unit disc then $f(M)$ and $f(N)$ are similar as well, and the matrix that performs both similarities is the same,
\[
M=P^{-1}NP\implies f(M)=P^{-1}f(N)P,
\]
as an elementary computation using the power series of $f$ shows. As a consequence, we can assume without loss of generality that each matrix of the sequence $A$ is in its \emph{Jordan canonical form}: if, for any positive integer $n$, $\lambda_{n, 1}, \dots, \lambda_{n, k_n}$ are the eigenvalues of $A_n$, then
\[
A_n=\diag(J_{n, 1}, \dots, J_{n, k_n}),
\]
where $J_{n, j}$ is a Jordan block of size $m_{n, j}$
\[
J_{n,j}=\begin{bmatrix}
\lambda_{n,j} & 1 & 0 & \dots & 0\\
0 & \lambda_{n, j} & 1 & \dots & 0\\
\vdots & \vdots & \vdots & \vdots & \vdots\\
0 & 0 & \dots & \lambda_{n, j} & 1\\
0 & 0 & 0 & 0 & \lambda_{n, j}
\end{bmatrix}.
\]
Since, for any function $f$ holomorphic in $\D$, 
\[
f(A_n)=\diag(f(J_{n, 1}),\dots, f(J_{n, k_n})),
\]
 where
\[
f(J_{n,j})=\begin{bmatrix}
f(\lambda_{n,j}) & f'(\lambda_{n,j}) & \frac{f''(\lambda_{n,j})}{2} & \dots & \frac{f^{(m_{n, j}-1)}(\lambda_{n,j})}{(m_{n, j}-1)!}\\
0 &f(\lambda_{n, j}) & f'(\lambda_{n,j}) & \dots & \frac{f^{(m_{n, j}-2)}(\lambda_{n,j})}{(m_{n, j}-2)!}\\
\vdots & \vdots & \vdots & \vdots & \vdots\\
0 & 0 & \dots & f(\lambda_{n, j}) & f'(\lambda_{n,j})\\
0 & 0 & 0 & 0 & f(\lambda_{n,j})
\end{bmatrix},
\]
one realizes that a holomorphic function vanishes at $A_n$ if and only if it vanishes at its eigenvalues with the right multiplicity.  Specifically, the multiplicity of $\lambda_{n, j}$ as a zero of $f$ must be the maximal size of a Jordan block of $A_n$ associated to $\lambda_{n, j}$. In particular, any function in $\h^2$ that vanishes at $A_n$ is a multiple of the Blaschke product and the function in  \eqref{eqn:Bn} can be re-written as
\[
B_n=\prod_{j=1}^{k_n}b_{\lambda_{n, j}}^{m_{n, j}}.
\]
Hence, for any $n$ in $\N$, the subspace
\begin{equation}
\label{eqn:matmod}
\begin{split}
H_n:=&\h^2\ominus\{f\in\h^2\,|\, f(A_n)=0\}\\
=&\spa\left\{s_{\lambda_{n, j}}, \frac{\partial~s_{\lambda_{n, j}}}{\partial\overline{w}},\dots, \frac{\partial^{m_{n, j}-1}~s_{\lambda_{n, j}}}{\partial\overline{w}^{m_{n, j}-1}}\,\bigg|\, j=1, \dots, k_n\right\}
\end{split}
\end{equation}
containing all the interpolation information of the matrix $A_n$ is in fact a model space:
\[
H_n=H_{B_n}\qquad n\in\N.
\]
Each $H_n$ can be seen also as a \emph{kernel at the matrix $A_n$}. More precisely, let $M$ be a $m\times m$ square matrix with eigenvalues in the unit disc, and let $H_M$ be the associated model space in $\h^2$. Let us define, for any $u$ and $v$ in $\C^m$, the $\h^2$ function
\[
K_M(u, v)(z):=\sum_{n\in\N}\braket{v, M^nu}_{\C^m}z^n\qquad z\in\D.
\]
Then, thanks to the definition of the inner product in $\h^2$,
\begin{equation}
\label{eqn:repmat}
\braket{f, K_{M}(u, v)}_{\h^2}=\braket{f(M)u, v}_{\C^m}\qquad f\in\h^2.
\end{equation}
Equation \ref{eqn:repmat} works as a \emph{reproducing property} for the collection
\[
X_M:=\{K_M(u, v)\,|\, u, v \in\C^m\}.
\]
In particular, since $f(M)=0$ if and only if the right hand side of \eqref{eqn:repmat} vanishes for any $u$ and $v$, we have that $X_M$ coincides with the model space $H_M$. Moreover, \eqref{eqn:repmat} implies that the collection of function in $X_M$ is linear in $v$ and conjugate-linear in $u$, and that for any $\phi$ in $\h^\infty$
\begin{equation}
\label{eqn:invariant}
M_\phi^*(K_M(u, v))=K_M(u, \phi(M)^*v)\qquad u, v\in\C^m,
\end{equation}
extending \eqref{eqn:eigen} to this matrix setting.
Another analogy with the scalar case comes from separation: thanks to \eqref{eqn:invariant} and the commutant lifting Theorem, \cite[case $s=1$ of Corollary 10.30]{john}, \eqref{eqn:anglespick} extends by saying that $A$ is {\bf weakly separated} if the sequence $(H_n)_{n\in\N}$ is weakly separated or, equivalently, if there exists a positive $M$ such that, for any pair of distinct positive integers $n$ and $j$, there exists a bounded analytic function $\phi_{n, j}$ whose $\h^\infty$ norm doesn't exceed $M$ and that separates $A_n$ and $A_j$, that is, 
\[
\phi_{n, j}(A_n)=Id\qquad\phi_{n, j}(A_j)=0.
\]
Following the same idea, if $(H_n)_{n\in\N}$ is strongly separated we say that $A$ is {\bf strongly separated}, and the commutant lifting Theorem, together with \eqref{eqn:invariant}, makes it equivalent to asserting the existence of a bounded sequence $(\phi_n)_{n\in\N}$ in $\h^\infty$ that separates each $A_n$ with the rest of the sequence, i.e.,
\[
\phi_n(A_j)=\delta_{n, j}~Id.
\]
The scalar case has an even more geometric viewpoint on separation via bounded analytic functions: given two points $z$ and $w$ in the unit disc there exists a function $\phi$ whose $\h^\infty$ norm doesn't exceed $M$ that separates $z$ and $w$ (and hence the sine of the angle between $s_z$ and $s_w$ is bounded below by $1/M$) if and only if their \emph{pseudo-hyperbolic distance}
\[
\rho(z, w):=|b_w(z)|
\]
 is bounded below by $1/M$. This extends to the matrix case by looking at the action of the adjoint of the multiplication by a Blaschke product on different model spaces:
\begin{lemma}
\label{lemma:pseudo}
Let $A_1$ and $A_2$ be two square matrices corresponding to the Blaschke products $B_1$ and $B_2$, and let $H_1$ and $H_2$ be the associated model spaces. Then the sine of the angle between $H_1$ and $H_2$ is equal to $\delta>0$ if and only if the restriction of $M^*_{B_1}$ to $H_2$ is bounded below by $\delta$, that is,
\[
\inf_{x\in H_2}||M_{B_1}^*(\hat{x})||=\sin(H_1, H_2).
\]
\end{lemma}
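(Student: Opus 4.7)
The plan is a short two-step reduction. The first step exploits that $B_1$ is inner to rewrite the norm of $M_{B_1}^*$ on $H_2$ in terms of the orthogonal projection $P_{H_1}$; the second step recognises the resulting quantity as the familiar orthogonal-projection formula for $\sin(H_1,H_2)$.

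For the first step, since $B_1$ is inner, $M_{B_1}$ is an isometry of $\h^2$ onto $B_1\h^2$, so $M_{B_1}M_{B_1}^*$ is the orthogonal projection onto $B_1\h^2=\h^2\ominus H_1$; equivalently,
\[
M_{B_1}M_{B_1}^* = I-P_{H_1}.
\]
Taking inner products with $x$ gives $\|M_{B_1}^*x\|^2 = \|x\|^2-\|P_{H_1}x\|^2$, so that for unit $x \in H_2$ one has $\|M_{B_1}^*(\hat{x})\|^2 = 1-\|P_{H_1}x\|^2$.

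For the second step, by the definition of the sine recalled in the introduction, the sine of the angle between unit vectors $u\in H_1$ and $v\in H_2$ equals $\sqrt{1-|\braket{u,v}|^2}$. Cauchy--Schwartz, with equality at $u$ proportional to $P_{H_1}v$ when that projection is nonzero, yields
\[
\sup_{u\in H_1,\,\|u\|=1}|\braket{u,v}| = \|P_{H_1}v\|,
\]
so that
\[
\sin(H_1,H_2) = \inf_{v\in H_2,\,\|v\|=1}\sqrt{1-\|P_{H_1}v\|^2}.
\]
Combining with the first step produces $\inf_{x\in H_2}\|M_{B_1}^*(\hat{x})\|=\sin(H_1,H_2)$, which is the desired identity.

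The proof is really just the marriage of two standard ingredients---the Beurling-type identity $M_\Theta M_\Theta^* = I-P_{H_\Theta}$ for inner $\Theta$, and the orthogonal-projection description of the sine between two subspaces---so there is no substantial obstacle, only the bookkeeping of the orthogonal decompositions. One mild edge case is worth recording: if $H_1\cap H_2\ne\{0\}$, any unit vector $x$ in the intersection satisfies $P_{H_1}x=x$ and hence $M_{B_1}^*x=0$, while $\sin(H_1,H_2)=0$ by definition, so both sides of the identity vanish and the statement still holds.
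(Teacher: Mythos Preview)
Your proof is correct and follows essentially the same approach as the paper's. Both hinge on the identity $M_{B_1}M_{B_1}^*=I-P_{H_1}$ (the projection onto $B_1\h^2=H_1^\perp$) and the orthogonal-projection description of $\sin(H_1,H_2)$; the only cosmetic difference is that the paper passes from $\|M_{B_1}M_{B_1}^*(\hat{x})\|$ to $\|M_{B_1}^*(\hat{x})\|$ via the isometry of $M_{B_1}$, whereas you reach the same conclusion by expanding $\|M_{B_1}^*x\|^2=\langle M_{B_1}M_{B_1}^*x,x\rangle$.
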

\begin{proof}
Since the orthogonal projection onto $B_1\h^2=\h^2\ominus H_{1}$ is $M_{B_1}M^*_{B_1}$, one gets
\[
\sin(H_1, H_2)=\inf_{x\in H_2\setminus\{0\}}||M_{B_1}M_{B_1}^*(\hat{x})||=\inf_{x\in H_2\setminus\{0\}}||M_{B_1}^*(\hat{x})||,
\]
since $M_{B_1}$ is an isometry.
\end{proof}
The author extended in \cite{intmat} Carleson's characterizations, \cite{carlocm} and \cite{carloss}, of interpolating sequences to sequences of square matrices, together with the characterization of interpolating sequences in terms of Riesz systems conditions from \cite{shapiro}. Nevertheless, the analogous of Theorem \ref{theo:carlo} was proven with the additional assumption that the dimensions of the matrices in $A$ are uniformly bounded, and used the solution of the Feichtinger conjecture:
\begin{theo}
\label{theo:intmat}
 Let $A=(A_n)_{n\in\N}$ be a sequence of matrices with spectra in the unit disc, and let $H=(H_n)_{n\in\N}$ be the associated sequence of model spaces defined in \eqref{eqn:matmod}. The following are equivalent:
\begin{description}
\item[(i)] $A$ is interpolating for $\h^\infty$;
\item[(ii)] $A$ is strongly separated;
\item[(iii)] $H$ is a Riesz system.
\end{description}
Moreover, if the dimensions of the matrices in $A$ are uniformly bounded, (i) (and hence also all the conditions above) is equivalent to 
\begin{description}
\item[(iv)] $A$ is weakly separated and $H$ is a Bessel system.
\end{description}
\end{theo}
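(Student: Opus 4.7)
The plan is to separate the two groups of equivalences. For the main chain (i) $\Leftrightarrow$ (ii) $\Leftrightarrow$ (iii), I would adapt Nikolski's Theorem \ref{theo:nik} to the matrix setting, exploiting the fact that the model space $H_n$ of \eqref{eqn:matmod} encodes all interpolation data at $A_n$ and that the Blaschke product $B_n$ of \eqref{eqn:Bn} characterises divisibility in $\h^2$ by functions vanishing at $A_n$. For (i) $\Rightarrow$ (ii), apply interpolation to the bounded sequence $(\phi_k)$ with $\phi_n \equiv 1$ and $\phi_k \equiv 0$ for $k \ne n$; the resulting interpolant realises strong separation, with a uniform bound coming from an open-mapping argument on the appropriate restriction map out of $\h^\infty$. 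The equivalence (ii) $\Leftrightarrow$ (iii) reduces to the matrix analog of the fact that \eqref{eqn:nik:gc} is equivalent to the Riesz-system condition \eqref{eqn:nik:riesz}; in one direction this follows from Theorem \ref{theo:treatise} and the geometric characterization of Riesz systems via uniformly-bounded-below angles with the closed linear span of the other subspaces, while in the other direction one uses the Riesz-basis structure to produce a target in the closed linear span of $H$ that reproduces the desired values via \eqref{eqn:repmat}, and then applies commutant lifting through \eqref{eqn:invariant} to construct $\phi$.

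The implication (i) $\Rightarrow$ (iv) is tautological: strong separation implies weak separation, and the Riesz condition implies the Bessel condition. The substantive content is (iv) $\Rightarrow$ (iii) under the bounded-dimension hypothesis, and here I would invoke the positive solution of the Feichtinger conjecture. Fix an upper bound $d$ on the dimensions of the $H_n$'s and select, for each $n$, an orthonormal basis of $H_n$. The Bessel condition on $(H_n)$ implies that the family of lines through these basis vectors is itself a Bessel system (with constant scaled by a factor depending only on $d$). Feichtinger then partitions this family into finitely many Riesz sub-sequences of lines. Because at most $d$ basis vectors come from any single $H_n$, a pigeonhole refinement produces a partition of $\N$ into finitely many subsets on each of which the corresponding sub-collection of $(H_n)$ is a Riesz system. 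Finally, weak separation is used to combine these finitely many Riesz sub-systems into a single Riesz system for all of $H$.

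The hardest step is the last combining one. Weak separation only controls the angle between distinct pairs $(H_n, H_j)$, not the angle between $H_n$ and the closed linear span of all other $H_j$'s, which is what the Riesz-system property ultimately demands. The argument must therefore use both the finiteness of the Feichtinger partition and the uniform dimension bound: within each Riesz piece the lower bound is automatic, and across pieces a careful geometric estimate, exploiting the fact that only finitely many pieces are present, must upgrade pairwise angle control to the needed uniform bound. This is also precisely where the bounded-dimension hypothesis is essential: in the unbounded-dimension case the Feichtinger partition of basis lines does not in general descend to a partition of the subspaces $H_n$, which is exactly the gap that the new Theorem \ref{theo:main} is designed to fill.
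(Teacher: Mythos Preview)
The paper does not prove Theorem~\ref{theo:intmat}; it is quoted from \cite{intmat} (see the sentence immediately preceding the statement). So there is no ``paper's own proof'' to compare against, only the surrounding discussion, which already does most of the work for the first block of equivalences.

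For (i) $\Leftrightarrow$ (ii) $\Leftrightarrow$ (iii), your plan is correct but heavier than necessary. The paper's discussion makes clear that $\phi(A_n)=\phi_n(A_n)$ holds precisely when $\phi-\phi_n\in B_n\h^2$, so condition~(i) here is literally condition~(i) of Theorem~\ref{theo:nik} with $\Theta_n=B_n$. Likewise, strong separation of $A$ is \emph{defined} as \eqref{eqn:strongmodel} for $(H_n)$, which the paper (citing \cite[Lec.~IX]{treatise}) identifies with condition~(iii) of Theorem~\ref{theo:nik}; and the Riesz-system property is condition~(ii) of Theorem~\ref{theo:nik}. Hence (i) $\Leftrightarrow$ (ii) $\Leftrightarrow$ (iii) is a direct application of Theorem~\ref{theo:nik}, not an adaptation; the only thing to check is that $\prod_n B_n$ converges, which follows from any one of the three hypotheses. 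Your open-mapping and commutant-lifting arguments are valid alternative routes, but they re-derive pieces of Nikolski rather than invoke the theorem you already have.

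For (iv) $\Rightarrow$ (iii) under bounded dimension, your use of the Feichtinger conjecture matches what the paper says about the proof in \cite{intmat}. Your step from ``$(H_n)$ Bessel'' to ``orthonormal basis vectors form a Bessel system'' is correct (write any combination of basis vectors as $\sum_n\alpha_n h_n$ with $h_n\in H_n$ unit). The part that is genuinely underspecified is the ``pigeonhole refinement'': a Feichtinger partition of the basis \emph{lines} need not induce a partition of the \emph{indices} $n$ in any obvious way (e.g.\ if every $H_n$ contributes one basis vector to each Feichtinger piece, no index-level partition is suggested). What one actually needs is that a finite union of Riesz systems of lines, regrouped into subspaces of bounded dimension, is again a Riesz system once weak separation is imposed; you identify this as the hard step, which is fair, but the sketch does not indicate how the dimension bound and weak separation conspire to give the lower Riesz inequality across pieces. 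That is the gap to fill if you want a self-contained argument.
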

Thanks to equivalence between conditions (i), (ii) and (iii) in Theorem \ref{theo:intmat}, Theorem \ref{theo:main} applied to the sequence of model spaces $(H_n)_{n\in\N}$ says, together with Theorem \ref{theo:nik}, that the extra assumption on the dimensions of the matrices in $A$ in condition (iv) of Theorem \ref{theo:intmat} can be dropped. Therefore Theorem \ref{theo:carlo} extends to sequences of matrices of any sizes:
\begin{theo}
$A$ is interpolating if and only if it is weakly separated and $H$ is a Bessel system.
\end{theo}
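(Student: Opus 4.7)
My plan is to combine Theorem~\ref{theo:intmat} with Theorem~\ref{theo:main}, exploiting the fact that the equivalence (i)~$\Leftrightarrow$~(iii) in Theorem~\ref{theo:intmat} holds without any restriction on the dimensions of the matrices in $A$. I would first recall that, as discussed in Section~\ref{sec:intmat}, weak separation of $A$ was defined to be precisely weak separation of the associated sequence of model spaces $(H_n)_{n\in\N}$, so the two notions can be used interchangeably throughout the argument.

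For the ``only if'' direction the reduction is immediate: if $A$ is interpolating, then by Theorem~\ref{theo:intmat} the sequence $A$ is strongly separated (hence weakly separated), and $H$ is a Riesz system, so in particular a Bessel system.

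For the ``if'' direction -- which carries all the content -- I would start from the hypotheses that $A$ is weakly separated and $H$ is a Bessel system. These two conditions together say exactly that $(H_n)_{n\in\N}$ is a weakly separated Bessel system of model spaces in $\h^2$, so Theorem~\ref{theo:main} applies and upgrades $H$ to a Riesz system. The implication (iii)~$\Rightarrow$~(i) of Theorem~\ref{theo:intmat} then produces the desired bounded interpolant, completing the proof.

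The hard part is invisible in this final reduction: it has been entirely absorbed into the proof of Theorem~\ref{theo:main}, whose role is precisely to remove the uniform-dimension assumption present in condition (iv) of Theorem~\ref{theo:intmat}. Once that upgrade from ``weakly separated Bessel'' to Riesz is available at the level of model spaces, the present theorem reduces to the short bookkeeping sketched above.
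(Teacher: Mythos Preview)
Your proof is correct and matches the paper's own argument essentially verbatim: the paper derives the theorem by applying Theorem~\ref{theo:main} to $(H_n)_{n\in\N}$ and then invoking the dimension-free equivalences (i)$\Leftrightarrow$(ii)$\Leftrightarrow$(iii) of Theorem~\ref{theo:intmat}. The only cosmetic difference is that the paper also name-checks Theorem~\ref{theo:nik} (the tool used inside the proof of Theorem~\ref{theo:main}), but this adds nothing beyond what you wrote.
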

 In particular, Theorem \ref{theo:carlomat} follows from Theorem \ref{theo:main}.\\

We present below two examples of sequences of matrices having equidistributed eigenvalues. Section \ref{sec:ex} will give an example of an interpolating sequence of matrices, together with some useful tool to estimate the angle between model spaces arising from Blaschke products. Section \ref{sec:feicmodel} will use a similar construction in order to exhibit a Bessel system of model spaces which is not the disjoint union of finitely many weakly separated sequences, and hence not the finite union of finitely many Riesz systems. This will give a negative answer to a question posed by the author in \cite{intmat}, where it was asked whether the positive answer to the Feichtinger conjecture can be extended to multi-dimensional model sub-spaces of the Hardy space.
\subsection{Interpolating Matrices with Equidistributed Eigenvalues}
\label{sec:ex}
Let, for any positive integer $n$, 
\[
\omega_n:=e^\frac{2\pi i}{2^n}
\]
 be a primitive $2^n$-root of unity, and let
\[
W_n:=\diag(1, \omega_n, \dots, \omega_n^{2^n-1})
\]
be a $2^n\times2^n$ diagonal matrix having $2^n$ equi-distributed points on the unit circle as its eigenvalues. Let $(r_n)_{n\in\N}$ be a sequence in $(0, 1)$ that re-scales the sequence $(W_n)_{n\in\N}$ so that its spectra belong to $\D$:
\begin{equation}
\label{eqn:dyadmat}
A_n:=r_n~W_n,\qquad n\in\N.
\end{equation}
We will discuss here how fast must $(r_n)_{n\in\N}$ go to $1$ in order for $A:=(A_n)_{n\in\N}$ to be interpolating. Thanks to Definition \ref{defi:intmat}, if $(A_n)_{n\in\N}$ is interpolating, then so is the sequence of radii $(r_n)_{n\in\N}$. Moreover, if $A$ is interpolating it is trivially a \emph{zero sequence}, that is, there exists bounded analytic function on $\D$ that vanishes on $A$ and that doesn't vanish outside the spectra of the matrices in $A$. It turns out that those two conditions are enough to characterize interpolating sequences of matrices that look like \eqref{eqn:dyadmat}:
\begin{theo}
\label{theo:example}
The sequence of matrices defined in \eqref{eqn:dyadmat} is interpolating if and only if it is a zero sequence and $(r_n)_{n\in\N}$ is interpolating. 
\end{theo}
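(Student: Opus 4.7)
The plan is to handle each implication separately, with the forward direction amounting to bookkeeping and the reverse direction needing the verification that $H$ is weakly separated Bessel, so that Theorem \ref{theo:carlomat} applies.

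For the forward direction, if $A$ is interpolating then $(r_n)$ is interpolating since $r_n$ lies in the spectrum of $A_n$, making any $\phi$ realizing $\phi(A_n)=w_n\cdot Id$ also realize $\phi(r_n)=w_n$. To see that $A$ is a zero sequence, apply Theorem \ref{theo:intmat} to get that $H$ is Bessel; plugging $x=\hat{s}_0$ into Remark \ref{rem:proj} and invoking Theorem \ref{theo:distinner} yields $\sum_n(1-|B_n(0)|^2)<\infty$, and a direct computation using the root-of-unity structure shows $|B_n(0)|=r_n^{2^n}$, which turns the summability into the Blaschke-type condition $\sum_n 2^n(1-r_n)<\infty$. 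This is exactly what is needed for $\prod_n B_n$ to converge to a Blaschke product whose zero set is the multiset of eigenvalues of $A$, so $A$ is a zero sequence.

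For the reverse direction, assume that $(r_n)$ is interpolating and that $A$ is a zero sequence, and use the key identity
\[
|B_n(z)|=|b_{\rho_n}(z^{2^n})|,\qquad \rho_n:=r_n^{2^n},
\]
which comes from the factorization $\prod_{k=0}^{2^n-1}(z-r_n\omega_n^k)=z^{2^n}-\rho_n$. From this, Theorem \ref{theo:treatise} gives weak separation: $|B_n(z)|<\varepsilon$ forces $z^{2^n}$ to be pseudo-hyperbolically $\varepsilon$-close to $\rho_n$, and since pseudo-hyperbolic distance to a point in $(0,1)$ is bounded below by that of the modulus to the same point, $|z|^{2^n}$ is $\varepsilon$-close to $\rho_n$; an expansion of $u^{2^n}$ at $r_n$ then gives $\rho(|z|,r_n)\leq C\varepsilon$ for a constant $C$ that is uniform under the zero-sequence assumption. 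The weak separation of $(r_n)$ then excludes two distinct indices from being so close to the same $|z|$, yielding the uniform lower bound on $\max(|B_n(z)|,|B_m(z)|)$ that Theorem \ref{theo:treatise} needs.

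The delicate step is the Bessel bound, whose main obstacle is passing from a Bessel bound for the normalized Szegö kernels at the eigenvalues to one for the subspaces $H_n$. The plan is first to verify that
\[
\mu:=\sum_{n\in\N}\sum_{k=0}^{2^n-1}(1-r_n^2)\delta_{r_n\omega_n^k}
\]
is Carleson for $\h^2$ by bounding the level-$n$ contribution to a Carleson box of arc-length $2\pi h$ by $(2^n h+1)(1-r_n^2)$, and controlling the two resulting series respectively by $\sum 2^n(1-r_n)<\infty$ (the zero-sequence hypothesis) and by the Carleson constant of the scalar interpolating sequence $(r_n)$; this gives a Bessel bound for the lines spanned by $(\hat{s}_{r_n\omega_n^k})_{n,k}$. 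I then upgrade this to a Bessel bound for the subspaces $H_n=\spa\{\hat{s}_{r_n\omega_n^k}\,|\,k=0,\dots,2^n-1\}$ by observing that the Gram matrix $G_n$ of $\{\hat{s}_{r_n\omega_n^k}\}_k$ is circulant, and so can be diagonalized explicitly by Fourier, yielding smallest eigenvalue at least $\rho_n^2/r_n^2\to 1$ as $n\to\infty$ thanks to the zero-sequence assumption. A uniform lower bound $c_0>0$ for $\lambda_{\min}(G_n)$ then gives $||P_{H_n}x||^2\leq c_0^{-1}\sum_k|\braket{x,\hat{s}_{r_n\omega_n^k}}|^2$, and summing over $n$ concludes the proof via Theorem \ref{theo:carlomat}.
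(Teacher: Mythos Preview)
Your proof is correct, but it takes a genuinely different route from the paper. The paper argues the reverse direction by proving \emph{strong} separation directly (condition (ii) of Theorem~\ref{theo:intmat}), without invoking Theorem~\ref{theo:carlomat}. It uses Lemma~\ref{lemma:blaschke} to reduce $\sin\bigl(H_n,\overline{\spa}_{j\neq n}\{H_j\}\bigr)$ to $\prod_{j\neq n}|B_j(r_n)|$ and then shows the latter is bounded below via the explicit estimate for $M_j(n)$ in Lemma~\ref{lemma:mjn}. You instead verify only \emph{weak} separation plus the Bessel condition and then feed these into Theorem~\ref{theo:carlomat}, which is the main result of the paper; this is logically permissible but makes the example depend on Theorem~\ref{theo:main} rather than stand independently of it.

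What your route buys is the avoidance of the Lemma~\ref{lemma:mjn} computation: your Bessel argument via the Carleson box count and the explicit diagonalisation of the circulant Gram matrix of $\{\hat s_{r_n\omega_n^k}\}_k$ is clean and gives the same uniform lower Riesz bound that the paper extracts from Lemma~\ref{lemma:mjn}. The one place where your sketch is thinner than it should be is the weak separation step: the claim that $\rho(|z|^{2^n},\rho_n)<\varepsilon$ implies $\rho(|z|,r_n)\leq C\varepsilon$ with a uniform $C$ is correct, but ``an expansion of $u^{2^n}$ at $r_n$'' is not quite the argument. What is needed is that the hyperbolic derivative $\dfrac{(1-u^2)\,2^n u^{2^n-1}}{1-u^{2^{n+1}}}$ of $u\mapsto u^{2^n}$ is bounded \emph{below} on the segment between $|z|$ and $r_n$; this holds because the zero-sequence hypothesis gives $\rho_n=r_n^{2^n}$ bounded away from $0$, which forces $|z|$ into a window $[1-C'2^{-n},1)$ whenever $\rho(|z|^{2^n},\rho_n)$ is small, and on that window the hyperbolic derivative is uniformly comparable to $1$. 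Once that is in place, the pseudo-hyperbolic triangle inequality and the weak separation of $(r_n)$ finish the step as you indicate.
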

\begin{rem}
\label{rem:weakint}
Since $(r_n)_{n\in\N}$ approaches the unit circle radially, asking that it is interpolating is actually the same as asking that it is just weakly separated \cite{treatise}[Lec. X, Cor. 5].
\end{rem}
The proof of Theorem \ref{theo:example} requires that we are able to estimate (from below) the angle between two model spaces arising from Blaschke products. Such a tool is the content of Lemma \ref{lemma:blaschke} below. Let $(B_n)_{n\in\N}$ be a sequence of Blaschke products such that $B=\prod_{n\in\N}B_n$ converges uniformly on any compact subset of $\D$ to a non zero inner function, and let $(H_n)_{n\in\N}$ be the associated sequence of model spaces in $\h^2$.  For any subset $\sigma$ of $\N$ we will define, for the sake of brevity,
\[
H_\sigma:=\overline{\underset{i\in\sigma}{\spa}}\{H_i\}
\]
and 
\[
B_\sigma=\prod_{i\in\sigma} B_i.
\]
\begin{lemma}
\label{lemma:blaschke}
Let $\sigma$ and $\tau$ be two disjoint subsets of $\N$, and suppose that $(H_i)_{i\in\sigma}$ is a Riesz system with Riesz bound $\gamma$. Then
\[
\sin(H_\sigma, H_\tau)\geq\frac{1}{\gamma^2}~\inf_{i\in\sigma}\sin(H_i, H_{\tau}).
\]
\end{lemma}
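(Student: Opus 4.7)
The plan is to reduce the sine estimate to a norm estimate for the backward shift $M^*_{B_\tau}$ acting on $H_\sigma$. The starting point is the identity $\|M^*_{B_\tau}(f)\|=\dist_{\h^2}(f, H_\tau)$, valid for every $f\in\h^2$, which is obtained exactly as in the proof of Theorem \ref{theo:distinner}: $M_{B_\tau}$ is an isometry and $M_{B_\tau}M^*_{B_\tau}$ is the orthogonal projection onto $B_\tau\h^2=\h^2\ominus H_\tau$. Using this identity together with Lemma \ref{lemma:pseudo}, the desired inequality reduces to proving
\[
\|M^*_{B_\tau}(x)\|\geq\frac{c}{\gamma^2}\|x\|,\qquad x\in H_\sigma,
\]
where $c:=\inf_{i\in\sigma}\sin(H_i, H_\tau)$.

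The key algebraic observation is that $M^*_{B_\tau}$ leaves each model space $H_{B_i}$, $i\in\sigma$, invariant. This is a one-line verification: for $x_i\in H_{B_i}$ and any $g\in\h^2$,
\[
\braket{M^*_{B_\tau}(x_i), B_ig}=\braket{x_i, B_iB_\tau g}=0
\]
because $B_iB_\tau g\in B_i\h^2$ and $x_i\perp B_i\h^2$; thus $M^*_{B_\tau}(x_i)\in H_{B_i}$.

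Given this, I would fix $x\in H_\sigma$ and expand $x=\sum_{i\in\sigma}x_i$ with $x_i\in H_i$ (the Riesz-system hypothesis provides existence and uniqueness of this decomposition). Then $M^*_{B_\tau}(x)=\sum_{i\in\sigma}M^*_{B_\tau}(x_i)$, and by the invariance each term still sits in the corresponding $H_i$. The lower Riesz bound applied to this new decomposition, the bound $\|M^*_{B_\tau}(x_i)\|\geq\sin(H_i, H_\tau)\|x_i\|\geq c\|x_i\|$ coming from Lemma \ref{lemma:pseudo}, and finally the upper Riesz bound $\|x\|^2\leq\gamma^2\sum_{i\in\sigma}\|x_i\|^2$ chain together to produce
\[
\|M^*_{B_\tau}(x)\|^2\geq\frac{1}{\gamma^2}\sum_{i\in\sigma}\|M^*_{B_\tau}(x_i)\|^2\geq\frac{c^2}{\gamma^2}\sum_{i\in\sigma}\|x_i\|^2\geq\frac{c^2}{\gamma^4}\|x\|^2,
\]
which is the target inequality.

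I expect the only non-routine ingredient to be the invariance $M^*_{B_\tau}(H_{B_i})\subseteq H_{B_i}$ in the second paragraph, which hinges on the fact that $B_i\h^2$ is stable under multiplication by $B_\tau$; once that is in place the rest is standard Riesz-basis bookkeeping on top of Lemma \ref{lemma:pseudo}. The disjointness of $\sigma$ and $\tau$ is not invoked explicitly, but it ensures the statement is non-vacuous, since otherwise $c=0$ trivially.
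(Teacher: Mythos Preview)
Your argument is correct and follows essentially the same route as the paper: both proofs hinge on the invariance $M^*_{B_\tau}(H_i)\subseteq H_i$, the lower bound $\|M^*_{B_\tau}(\hat x)\|\ge\sin(H_i,H_\tau)$ from Lemma~\ref{lemma:pseudo}, and the two Riesz inequalities. The only cosmetic difference is that the paper packages the computation as a bound $\|T\|\le\gamma^2$ for the projection-type operator $T$ on $H_{\sigma\cup\tau}$, whereas you bound $\|M^*_{B_\tau}\|$ from below on $H_\sigma$ directly via the identity $\|M^*_{B_\tau}f\|=\dist(f,H_\tau)$; the underlying chain of inequalities is the same.
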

\begin{proof}
For any $i$ in $\sigma$ let $\delta_i:=\sin(H_i, H_\tau)$, and let $\delta:=\inf_{i\in\sigma}\delta_i$. It suffices to show that 
\[
T\colon H_{\sigma\cup\tau}\to H_{\sigma\cup\tau}
\]
such that
\[
T_{|H_\sigma}=\delta~Id_{|H_\sigma}\qquad T_{|H_\tau}=0
\]
is bounded by $\gamma^2$. Let $T_i$ be, for any $i$ in $\sigma$, the restriction to $H_i$ of $M^*_{B_\tau}$. Thanks to Lemma \ref{lemma:pseudo}, each $T_i$ is bounded below by $\delta_i$. Fix then a vector $x=u+v$ in $H_{\sigma\cup\tau}$, where $u$ is in $H_\sigma$ and $v$ is in $H_\tau$. There exists a sequence $(h_i)_{i\in\sigma}$ of unit vectors such that $h_i$ is in $H_i$ for any $i$ in $\sigma$ so that $u$ can be written as a linear combination of $(h_i)_{i\in\sigma}$: 
\[
u=\sum_{i\in\sigma}\alpha_i h_i.
\]
Since each $T_i$ is a contraction and it is bounded below by $\delta$, the sequence $(T_i(h_i))_{i\in\N}$ is bounded above and below. Moreover, $T_i(h_i)$ belongs to $H_i$, since each $H_i$ is invariant under $M_{B_\tau}$, and therefore
\[
\begin{split}
||T(x)||^2&=||T(u)||^2=\left|\left|\sum_{i\in\sigma}\delta\alpha_i h_i\right|\right|^2\\
&\leq\gamma^2\delta^2\sum_{i\in\sigma}|\alpha_i|^2\leq\gamma^2\sum_{i\in\sigma}\delta_i^2|\alpha_i|^2\\
&\leq\gamma^2\sum_{i\in\sigma}|\alpha_i|^2||T_i(h_i)||^2\leq \gamma^4\left|\left|\sum_{i\in\sigma}\alpha_iT_i(h_i) \right|\right|^2\\
&=\gamma^4||M^*_{B_\tau}(u)||^2=\gamma^4||M^*_{B_\tau}(x)||^2.
\end{split}
\]
Since $M^*_{B_\tau}$ is a contraction, this shows that the norm of $T$ doesn't exceed $\gamma^2$, as we claimed.
\end{proof}
\begin{rem}
\label{rem:sin}
Suppose that also $(H_j)_{j\in\tau}$ is a Riesz system. Then a double application of Lemma $\ref{lemma:blaschke}$ implies that the distance between $H_\sigma$ and $H_\tau$ is comparable with the minimal distance attained by a model space labeled by an index in $\sigma$ and one with a label in $\tau$. If the sequence $(H_n)_{n\in\N}$ is weakly separated, this says roughly speaking that the set of \emph{sparse subsequences} of $(H_n)_{n\in\N}$ is a separated set as well.
\end{rem}
We are now ready to prove Theorem \ref{theo:example}. Here $(B_n)_{n\in\N}$ and $(H_n)_{n\in\N}$ arise from the sequence of matrices $A$ defined in \eqref{eqn:dyadmat}. We need to show that $A$ is interpolating, provided that it is a zero sequence and that $(r_n)_{n\in\N}$ is weakly separated, thanks to Remark \ref{rem:weakint}.
\begin{proof}[Proof of Theorem \ref{theo:example}]
 Let, for any positive integer $n$, 
\begin{equation}
\label{eqn:alpha}
r_n=1-\alpha_n2^{-n}.
\end{equation}
 Since $A$ is a zero sequence, then the spectra of the matrices in $A$ form a zero sequence and therefore
\[
\sum_{n\in\N}\alpha_n<\infty.
\]
Let $\gamma_n$ be the Riesz bound of the basis $\{\hat{s}_{r_n},\dots, \hat{s}_{r_n\omega_n^{2^n-1}}\}$ of $H_n$. Then $(\gamma_n)_{n\in\N}$ is uniformly bounded if and only if the strong separation constants
\[
\prod_{l=1}^{2^n-1}|b_{r_n\omega_n^l}(r_n)|^2
\]
are uniformly bounded below. Let, for any $j$ and $n$ in $\N$, 
\begin{equation}
\label{eqn:mjn}
\begin{split}
M_j(n):=&\sum_{l=1}^{2^j}|\braket{\hat{s}_{r_j\omega_j^l}, \hat{s}_{r_n}}|^2\\
=&\sum_{l=1}^{2^j}\frac{(1-r_n^2)(1-r_j^2)}{|1-r_nr_j\omega_j^l|^2}.
\end{split}
\end{equation}
Since $(\alpha_n)_{n\in\N}$ is bounded, then the  weak separation constant of the set $\{r_n, \dots, r_n\omega^{2^ n-1}\}$ is uniformly bounded below in $n$. Since also 
\[
|b_w(z)|^2=1-|\braket{\hat{s}_w, \hat{s}_z}|^2\qquad w, z\in\D, 
\]
then $(\gamma_n)_{n\in\N}$ is bounded if and only if $(M_n(n))_{n\in\N}$ is, which is the case, thanks to Lemma \ref{lemma:mjn} below, because $(\alpha_n)_{n\in\N}$ is bounded. We want to show that $(H_n)_{n\in\N}$ is strongly separated:
\[
\inf_{n\in\N}\sin\left(H_n,\underset{j\ne n}{\overline{\spa}}\{H_j\}\right)>0.
\]
Since $(\gamma_n)_{n\in\N}$ is bounded, thanks to Lemma \ref{lemma:blaschke} the sine between $H_n$ and $\underset{j\ne n}{\overline{\spa}}\{H_j\}$ is comparable
 to  
\[
\sin\left(\hat{s}_{r_n}, \underset{j\ne n}{\overline{\spa}}\{H_j\}\right)=\prod_{j\ne n}|B_j(r_n)|.
\]
 In fact, the sine between $H_n$ and $\underset{j\ne n}{\overline{\spa}}\{H_j\}$ is comparable to 
\[
\underset{l=0, \dots, 2^n-1}{\min}\sin\left(s_{r_n\omega_n^l}, \underset{j\ne n}{\overline{\spa}}\{H_j\}\right)=\underset{l=0, \dots, 2^n-1}{\min}|B_j(r_n\omega_n^l)|.
\] 
Nevertheless, the minimum is attained at $l=0$, since
\[
B_j(z)=\frac{r_j^{2^j}-z^{2^j}}{1-r_j^{2^j}z^{2^{j}}}=b_{r_j^{2^j}}(z^{2^j})
\]
and the point on the circle of radius $r_n$ which is closest (with respect the pseudo-hyperbolic distance) to $r_j^{2^j}$ is precisely $r_n$. \\
We are then left to show that
\begin{equation}
\label{eqn:newref}
\inf_{n\in\N}\prod_{j\ne n}|B_j(r_n)|>0.
\end{equation}
This is true if and only if each term on the product is uniformly bounded below and 
\[
\sup_{n\in\N}\sum_{j\in\N}M_j(n)<\infty.
\]
Such sum converges thanks to Lemma \ref{lemma:mjn}, since $(\alpha_n)_{n\in\N}$ is summable. Moreover, each term on the product in \ref{eqn:newref} is uniformly bounded below, since $(r_n)_{n\in\N}$ is weakly separated and since, thanks to Lemma \ref{lemma:blaschke}, the pseudo-hyperbolic distance between $r_n$ and $r_j$ is, for any $n\ne j$, comparable to $\sin(\hat{s}_{r_n}, H_j)=|B_j(r_n)|$.
\end{proof}
A technical tool for the proof of Theorem \ref{theo:example} is the following computation, which relates the quantity $M_j(n)$ to the parameters $(\alpha_n)_{n\in\N}$ defined in \eqref{eqn:alpha}:
\begin{lemma}
\label{lemma:mjn}
Let
\[
r_n:=1-\alpha_n2^{-n}\qquad n\in\N
\]
be a sequence in $(0, 1)$ and let $M_j(n)$ be defined as in \eqref{eqn:mjn}. Then, for any $j$ and $n$ positive integers,
\[
M_j(n)\simeq\frac{\alpha_n\alpha_j}{\alpha_n+\alpha_j2^{n-j}-\alpha_n\alpha_j2^{-j}}.
\]
\end{lemma}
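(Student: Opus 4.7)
The plan is to evaluate $M_j(n)$ in closed form by a Poisson-kernel summation identity and then reduce the resulting rational expression to the claimed form through direct algebraic manipulation.

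First, I would establish the identity
\[
\sum_{l=0}^{N-1}\frac{1}{|1-re^{2\pi il/N}|^2}=\frac{N(1+r^N)}{(1-r^2)(1-r^N)},\qquad 0\leq r<1,\ N\in\N,
\]
by expanding the reciprocal as a double geometric series $\sum_{m,k\geq 0}r^{m+k}e^{i(m-k)\theta}$ at $\theta=2\pi l/N$, summing over $l$ and using the orthogonality of characters to retain only pairs with $m\equiv k\pmod N$, then summing the remaining double geometric series. Applied with $r=r_nr_j$ and $N=2^j$ (noting that $\sum_{l=1}^{2^j}$ agrees with $\sum_{l=0}^{2^j-1}$ by periodicity) this turns the sum defining $M_j(n)$ into the explicit rational function
\[
M_j(n)=\frac{(1-r_n^2)(1-r_j^2)\cdot 2^j\bigl(1+(r_nr_j)^{2^j}\bigr)}{\bigl(1-(r_nr_j)^2\bigr)\bigl(1-(r_nr_j)^{2^j}\bigr)}.
\]

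Next, I would substitute $1-r_l=\alpha_l 2^{-l}$ and use the key identity
\[
1-r_nr_j=(1-r_n)+(1-r_j)-(1-r_n)(1-r_j)=2^{-n}\bigl(\alpha_n+\alpha_j 2^{n-j}-\alpha_n\alpha_j 2^{-j}\bigr),
\]
together with $(1-r_n)(1-r_j)\cdot 2^j=\alpha_n\alpha_j 2^{-n}$ and the factorization $1-(r_nr_j)^2=(1-r_nr_j)(1+r_nr_j)$, to recast
\[
M_j(n)=\frac{\alpha_n\alpha_j}{\alpha_n+\alpha_j 2^{n-j}-\alpha_n\alpha_j 2^{-j}}\cdot\frac{(1+r_n)(1+r_j)\bigl(1+(r_nr_j)^{2^j}\bigr)}{(1+r_nr_j)\bigl(1-(r_nr_j)^{2^j}\bigr)}.
\]
The first factor is exactly the claimed right-hand side, so the lemma reduces to showing that the residual second factor is pinched between two positive constants.

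The main obstacle will be the denominator term $1-(r_nr_j)^{2^j}$: the four quantities $1+r_n,\,1+r_j,\,1+r_nr_j,\,1+(r_nr_j)^{2^j}$ all lie in $[1,2]$ and contribute only harmless bounded constants. To control $1-(r_nr_j)^{2^j}$ I would use the elementary estimate $(1-x)^m\asymp e^{-mx}$ for $x$ bounded away from $1$ to get $(r_nr_j)^{2^j}\asymp e^{-(\alpha_j+\alpha_n 2^{j-n})}$, and then $1-e^{-t}\asymp\min(t,1)$ to compare this factor with the target denominator $\alpha_n+\alpha_j 2^{n-j}-\alpha_n\alpha_j 2^{-j}=2^n(1-r_nr_j)$ identified in Step~2, matching modulo bounded multiplicative constants in the regime of parameters in which the lemma is actually invoked in the proof of Theorem \ref{theo:example}.
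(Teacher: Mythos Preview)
Your route is genuinely different from the paper's. The paper expands $|1-r_nr_j\omega_j^l|^2$ via the cosine law, replaces $1-\cos\theta$ by $\theta^2$, and compares the resulting sum to an $\arctan$ integral. Your exact Poisson--kernel identity
\[
\sum_{l=0}^{N-1}\frac{1}{|1-re^{2\pi i l/N}|^2}=\frac{N(1+r^N)}{(1-r^2)(1-r^N)}
\]
is cleaner: it gives the closed formula you wrote for $M_j(n)$ without any approximation, and the algebra isolating the factor $\dfrac{\alpha_n\alpha_j}{\alpha_n+\alpha_j2^{n-j}-\alpha_n\alpha_j2^{-j}}$ is correct.

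The gap is in your last paragraph. You need the residual factor
\[
\frac{(1+r_n)(1+r_j)\bigl(1+(r_nr_j)^{2^j}\bigr)}{(1+r_nr_j)\bigl(1-(r_nr_j)^{2^j}\bigr)}
\]
to be $\simeq 1$, i.e.\ $1-(r_nr_j)^{2^j}$ bounded below. With $t:=2^j(1-r_nr_j)=\alpha_j+\alpha_n2^{j-n}-\alpha_n\alpha_j2^{-n}$ one has $1-(r_nr_j)^{2^j}\simeq\min(t,1)$, and nothing in the hypotheses forces $t\gtrsim 1$. In fact the two-sided estimate fails as stated: take $j=n$ and $\alpha_n\to 0$; then $M_n(n)\ge |\langle\hat s_{r_n},\hat s_{r_n}\rangle|^2=1$ (the term $l=2^n$), while the claimed right-hand side equals $\alpha_n/(2-\alpha_n2^{-n})\to 0$. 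Your attempted rescue, comparing $1-e^{-t}$ with the ``target denominator'' $2^n(1-r_nr_j)$, cannot work: the exponent is $2^j(1-r_nr_j)$, off from that denominator by exactly $2^{j-n}$, so no universal comparison is available.

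For context, the paper's own argument has the same soft spot. When $t$ is small the $l=0$ summand $\frac{(1-r_j)(1-r_n)}{(1-r_jr_n)^2}$ dominates but is silently dropped in passing to $\sum_{l=1}^{2^{j-1}}$, and the final step asserts that the $\arctan$ integral is $\simeq 1$, which also fails precisely when $t$ is small. What both arguments actually establish is $M_j(n)\simeq \dfrac{\alpha_n\alpha_j}{\alpha_n+\alpha_j2^{n-j}-\alpha_n\alpha_j2^{-j}}\cdot\dfrac{1}{\min(1,t)}$; the stated $\simeq$ holds only under the extra assumption $\alpha_j+\alpha_n2^{j-n}\gtrsim 1$. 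If you want to salvage the lemma for its use in Theorem~\ref{theo:example}, you should isolate and verify that hypothesis (or argue directly with the corrected estimate) rather than appeal to an unspecified ``regime''.
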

\begin{proof}
Let, for any $j$ in $\N$ and for any $l=-2^{j-1}, \dots, 2^{j-1}$,
\[
\theta^j_l:=\arg{\omega_j^l}=\frac{2\pi l}{2^j}.
\]
Then 
\[
1-\cos(\theta^j_l)\simeq\frac{ (\theta^j_l)^2}{2}\simeq\frac{l^2}{2^{2j+1}}\qquad l=-2^{j-1}, \dots, 2^{j-1}
\]
and therefore
\[
\begin{split}
M_j(n)&\simeq(1-r_j)(1-r_n)\sum_{l=-2^{j-1}}^{2^{j-1}}\frac{1}{|1-r_nr_j\omega_j^l|^2}\\
&=(1-r_j)(1-r_n)\sum_{l=-2^{j-1}}^{2^{j-1}}\frac{1}{(1-r_nr_j)^2+2((1-\cos(\theta^j_l)))r_nr_j}\\
&\simeq\frac{(1-r_j)(1-r_n)}{(1-r_jr_n)^2}\sum_{l=1}^{2^{j-1}}\frac{1}{1+\left(\frac{\sqrt{r_nr_j}}{2^j(1-r_nr_j)}~l\right)^2}\\
&\simeq\frac{(1-r_j)(1-r_n)}{(1-r_jr_n)^2}\int_{1}^{2^{j-1}}\frac{1}{1+\left(\frac{\sqrt{r_nr_j}}{2^j(1-r_nr_j)}~x\right)^2}\,dx\\
&=\frac{2^j(1-r_j)(1-r_n)}{(1-r_jr_n)\sqrt{r_jr_n}}\int_{\frac{\sqrt{r_jr_n}}{2^j(1-r_jr_n)}}^{\frac{\sqrt{r_jr_n}}{(1-r_jr_n)}}\frac{1}{1+x^2}\,dx\\
&\simeq\frac{2^j(1-r_j)(1-r_n)}{1-r_jr_n}\\
&=\frac{\alpha_n\alpha_j}{\alpha_n+\alpha_j2^{n-j}-\alpha_n\alpha_j2^{-j}}.
\end{split}
\]
\end{proof}
\subsection{Bessel Systems of Model Spaces}
\label{sec:feicmodel}
Thanks to the positive answer to the Feichtinger conjecture, any Bessel system of lines in a Hilbert space is the disjoint union of finitely many Riesz systems. We show here that this is not the case for multi-dimensional model spaces in $\h^2$, as we will construct a sequence of matrices $A$ which can not be written as the disjoint union of finitely many weakly separated sequences and whose associated sequence of model spaces is a Bessel system. This implies that \cite[Th. 9.11]{john} doesn't extend to multi-dimensional model spaces, and since any Riesz system is weakly separated it will show that the positive answer to the Feichtinger conjecture doesn't extend to multi-dimensional model spaces whose dimensions are not uniformly bounded.\\
Pick a divergent sequence $(m_n)_{n\in\N}$, and let $(r_n)_{n\in\N}$ be a sequence of radii that converges to $1$, and whose rate of convergence will be determined later. For any such a sequence, there exists a second sequence of radii $(s_n)_{n\in\N}$ such that 
\begin{equation}
\label{eqn:closecirc}
\rho(r_n, s_n)\leq\frac{1}{n+1}\qquad n\in\N.
\end{equation}
 For each $n$ in $\N$, consider $m_n$ diagonal matrices, $A_{n, 1}, \dots, A_{n, m_n}$, each one of size $m_n-1$, and whose eigenvalues are placed as follows: if $\omega_n$ is a $\binom{m_n}{2}$-th primitive root of unity, the set of all eigenvalues of $A_{n, 1}, \dots, A_{n, m_n}$ is 
\begin{equation}
\label{eqn:eigenlist}
\left\{r_n, r_n\omega_n, \dots, r_n\omega_n^{\binom{m_n}{2}-1}\right\}\cup\left\{s_n, s_n\omega_n, \dots, s_n\omega_n^{\binom{m_n}{2}-1}\right\}.
\end{equation}
We now need to assign each of those $m_n(m_n-1)$ eigenvalues to the matrices $A_{n, 1}, \dots, A_{n, m_n}$ so that
\begin{equation}
\label{eqn:notfu}
\sin\left(H_{A_{n, i}}, H_{A_{n, j}}\right)\leq\frac{1}{n+1}\qquad i, j=1, \dots, m_n,
\end{equation}
where $H_{n, i}$ is the model space in $\h^2$ associated to the matrix $A_{n, i}$.
Observe that this would imply that the sequence 
\[
A:=\left\{A_{n, j}\,|\,n\in\N, j=1, \dots, m_n\right\}
\]
can not be written as the disjoint union of finitely many weakly separated sub-sequences, since $(m_n)_{n\in\N}$ is divergent.  More importantly, this would hold for any choice of $(r_n)_{n\in\N}$. In order to achieve \eqref{eqn:notfu}, we assign the eigenvalues in \eqref{eqn:eigenlist} to the matrices $A_{n, 1}, \dots, A_{n, m_n}$ so that the set of non-ordered pairs
\[
\left\{\left(r_n\omega_n^l, s_n\omega_n^l\right)\,|\,l=1, \dots, \binom{m_n}{2}\right\}
\]
coincide with the set of non-ordered pairs 
\[
\{(\lambda,  \gamma)\,|\, \lambda\in\sigma(A_{n, i}), \gamma\in\sigma(A_{n, j}),\, i\ne j\}.
\]
Thus \eqref{eqn:notfu} follows from the fact that, for any $i\ne j=1,\dots, m_n$, there exists an eigenvalue $\lambda$ of $A_{n, i}$ and an eigenvalue $\gamma$ of $A_{n, j}$ so that $$\rho(\lambda, \gamma)\leq\frac{1}{n+1},$$ thanks to \eqref{eqn:closecirc}. 
\begin{ex}
In order to have an explicit example in mind, let $m_1=4$, and let then $\omega_1$ be a primitive $6$-th root of unity. One can then choose
\[
\begin{split}
r_1\in\sigma(A_{1, 1})\quad& s_1\in\sigma(A_{1,  2})\\
r_1\omega\in\sigma(A_{1, 1})\quad&s_1\omega\in\sigma(A_{1, 3})\\
r_1\omega^2\in\sigma(A_{1, 1})\quad&s_1\omega^2\in\sigma(A_{1, 4})\\
r_1\omega^3\in\sigma(A_{1, 2})\quad&s_1\omega^3\in\sigma(A_{1, 3})\\
r_1\omega^4\in\sigma(A_{1, 2})\quad&s_1\omega^4\in\sigma(A_{1, 4})\\
r_1\omega^5\in\sigma(A_{1, 3})\quad&s_1\omega^5\in\sigma(A_{1, 4}).
\end{split}
\]
In general, the idea is to assign, for all $n$, the $m_n(m_n-1)$ points on the two very close circle of radius $r_n$ and $s_n$ in a way so that the $\binom{m_n}{2}$ pairs of  points very close to each other correspond to the set of pairs $\{(A_{n, i}, A_{n, j})\}$.
 \end{ex}
 What is left now to show is that, if $(r_n)_{n\in\N}$ is chosen to be converging to $1$ adequately fast, the sequence of model spaces associated with the sequence $A$ is a Bessel system. Let $A'=(z_j)_{j\in\N}$ be the (scalar) sequence of all the eigenvalues of the matrices in $A$. Observe that we can recursively choose the sequence $(r_n)_{n\in\N}$ to approach $1$ fast enough so that 
\begin{equation}
\label{eqn:recursive}
\sup_{z\in\D}\sum_{j\in\N}1-|b_{z_j}(z)|^2<\infty.
\end{equation}
Indeed, one can use the same idea of \cite{intmat}[Th. 2.2] and add the contribution of the eigenvalues in \eqref{eqn:eigenlist} once at a time in the sum in \eqref{eqn:recursive}. More precisely, let $A_n'$ be the set in \eqref{eqn:eigenlist}, and let
\[
Q_n(z):=\sum_{j=1}^n\sum_{z_l\in A'_j}1-|b_{z_l}(z)|^2.
\]
Then, once we choose $r_1, \dots, r_n$, there exists a radius $t_n$ such that
\[
\sup_{t_n\leq|z|<1}Q_n(z)\leq\frac{1}{2^n},
\]
and it suffices then to choose $r_{n+1}$ so that
\[
\sum_{z_l\in A'_{n+1}}1-|b_{z_l}(z)|^2\leq\frac{1}{2^n}\qquad |z|\leq t_n
\]
and so that the Riesz bound of the set of normalized kernels at the points in $A'_{n+1}\cap\{|z|=r_{n+1}\}$ is uniformly bounded (in fact, since $m_{n+1}$ is fixed one can make such Riesz bound arbitrarily close to $1$, by choosing $r_{n+1}$ close enough to $1$ ).\\
Thanks to \eqref{eqn:recursive} and Remark \ref{rem:bessel:lines} the sequence of lines spanned by the Szegö kernels at the point of $A'$ is a Bessel system. This, together with an extra separation condition on the eigenvalues of the matrices in $A$, implies that the model spaces associated with the sequence $A$ forms a Bessel system:
\begin{lemma}
\label{lemma:regroup}
Let $(H_n)_{n\in\N}$ be a sequence of closed sub-spaces of a Hilbert space $\mathcal{H}$, and let, for any $n$ in $\N$, $\{x_n^1, \dots, x_n^{m_n}\}$ be a basis of $H_n$ made of unit vectors such that
\begin{equation}
\label{eqn:boundbelow}
\sum_{l=1}^{m_n}|c_l|^2\leq C_n^2~\left|\left|\sum_{l=1}^{m_n}c_lx_n^l\right|\right|^2, \qquad c_1, \dots, c_{m_n}\in\C.
\end{equation}
If 
\begin{equation}
\label{eqn:unifboundbelow}
C:=\sup_{n\in\N}C_n<\infty
\end{equation}
 and $(x_n^l)$ is a Bessel system with bound $M$, then $(H_n)_{n\in\N}$ is a Bessel system with bound $CM$.
\end{lemma}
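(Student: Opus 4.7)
The plan is to reduce the statement to the reformulation of the Bessel condition given in Proposition \ref{prop:bessel}, and then use Cauchy--Schwarz on each block $H_n$ together with the lower bound \eqref{eqn:boundbelow}.

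Concretely, I would fix an arbitrary unit vector $x$ in $\mathcal{H}$ and, for each $n$, an arbitrary unit vector $h_n$ in $H_n$. By Proposition \ref{prop:bessel}, it suffices to show that
\[
\sum_{n\in\N}|\braket{x, h_n}|^2\leq C^2M^2.
\]
Expanding $h_n$ in the given basis, write $h_n=\sum_{l=1}^{m_n}c_l^n\,x_n^l$. The lower bound \eqref{eqn:boundbelow} applied to $h_n$, together with the uniform bound \eqref{eqn:unifboundbelow}, gives
\[
\sum_{l=1}^{m_n}|c_l^n|^2\leq C_n^2\,\|h_n\|^2=C_n^2\leq C^2.
\]

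Then the Cauchy--Schwarz inequality on the finite sum defining $\braket{x,h_n}$ yields
\[
|\braket{x,h_n}|^2=\left|\sum_{l=1}^{m_n}\overline{c_l^n}\braket{x,x_n^l}\right|^2\leq\left(\sum_{l=1}^{m_n}|c_l^n|^2\right)\left(\sum_{l=1}^{m_n}|\braket{x,x_n^l}|^2\right)\leq C^2\sum_{l=1}^{m_n}|\braket{x,x_n^l}|^2.
\]
Summing over $n$ and invoking the Bessel bound $M$ of the system $(x_n^l)_{n,l}$ (again via Proposition \ref{prop:bessel}) gives
\[
\sum_{n\in\N}|\braket{x,h_n}|^2\leq C^2\sum_{n\in\N}\sum_{l=1}^{m_n}|\braket{x,x_n^l}|^2\leq C^2M^2,
\]
which is exactly the Bessel condition for $(H_n)_{n\in\N}$ with bound $CM$.

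There is no real obstacle here: the content of the lemma is simply that an upper Bessel bound for a generating system of unit vectors propagates to the span subspaces provided we have uniform lower Riesz bounds within each block to control the expansion coefficients. The only small care needed is to make sure that the supremum in the Bessel characterization is over arbitrary choices of unit vectors in each subspace, which is precisely what Proposition \ref{prop:bessel} supplies.
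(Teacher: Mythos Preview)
Your proof is correct and follows essentially the same approach as the paper: expand each $h_n$ in the given basis, use \eqref{eqn:boundbelow} and \eqref{eqn:unifboundbelow} to control the coefficients, and then apply the Bessel bound for the system $(x_n^l)$. The only cosmetic difference is that you argue via the dual characterization of Proposition~\ref{prop:bessel}, whereas the paper works directly with the norm inequality in the definition of a Bessel system.
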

\begin{proof}
Let $(h_n)_{n\in\N}$ be a sequence of unit vectors in $\mathcal{H}$ such that $h_n$ belongs to $H_n$ for any $n$ in $\N$, and let $(a_n)_{n\in\N}$ be an $l^2$ sequence. Write $h_n=\sum_{l=1}^{m_n}b_n^lx_n^l$, and observe that 
\[
\begin{split}
\left|\left|\sum_{n\in\N}a_nh_n\right|\right|^2=&\left|\left|\sum_{n\in\N}\sum_{l=1}^{m_n}a_nb_n^lx_n^l\right|\right|^2\\
\leq& M^2\sum_{n\in\N}|a_n|^2\sum_{l=1}^{m_n}|b_n^l|^2\\
\leq& C^2M^2\sum_{n\in\N}|a_n|^2,
\end{split}
\]
thanks to \eqref{eqn:boundbelow} and \eqref{eqn:unifboundbelow}.
\end{proof}
Since the sequence $(m_n)_{n\in\N}$ is fixed, by eventually increasing the rate of convergence of $(r_n)_{n\in\N}$ to $1$ we can assume that the Reisz bound of the kernels basis of each $H_{A_n}$ is uniformly bounded in $n$, thus in particular condition \eqref{eqn:boundbelow} holds. Indeed, although the eigenvalues of a given matrix $A_n$ might belong to two circle very close to each other, our construction ensures that their arguments are separated by at least an angle of $\frac{2\pi}{\binom{m_n}{2}}$. Therefore, thanks to Lemma \ref{lemma:regroup}, the sequence of model spaces associated to the sequence $A$ is a Bessel system. 

\end{document}